\DeclareMathOperator{\im}{im}
\DeclareMathOperator{\Lag}{Lag}
\theoremstyle{plain}
\newtheorem{thm}{Theorem}[section]
\newtheorem{lem}[thm]{Lemma}
\newtheorem{cor}[thm]{Corollary}
\theoremstyle{definition}
\newtheorem{dfn}[thm]{Definition}
\newtheorem{exam}[thm]{Example}
\newtheorem{rem}[thm]{Remark}
\newtheorem{hp}[thm]{Hypothesis}
\numberwithin{equation}{section}
\numberwithin{figure}{section}
\def\e#1\e{\begin{equation}#1\end{equation}}
\def\iz#1\iz{\begin{itemize}#1\end{itemize}}
\def\ea#1\ea{\begin{align}#1\end{align}}
\def\eq{\eqref}
\def\l{\label}
\def\0{\hspace{0pt}}
\def\Lh{\,\widehat{\!L}}
\def\Uh_#1{\,\widehat{\!U}_{\!#1}}
\def\fm{\mathfrak m}
\def\fp{\mathfrak p}
\def\px{\approx}
\def\T*#1{T^{*\!\!}#1}
\def\id{\mathop{\rm id}\nolimits}
\def\ge{\geqslant}
\def\le{\leqslant\nobreak}
\def\B_#1{\,\ov{\!B_{#1}}}
\def\Ba_#1{\,\ov{\!B_{#1}\!}\,}
\def\cF{{\mathbin{\cal F}}}
\def\={\equiv}
\def\C{{\mathbin{\mathbb C}}}
\def\CP{{\mathbin{\mathbb{C}}}P}
\def\Q{{\mathbin{\mathbb Q}}}
\def\R{{\mathbin{\mathbb R}}}
\def\Z{{\mathbin{\mathbb Z}}}
\def\al{\alpha}
\def\be{\beta}
\def\ep{\epsilon}
\def\la{\lambda}
\def\th{\theta}
\def\om{\omega}
\def\ph{\phi}
\def\Ph{\Phi}
\def\ps{\psi}
\def\Ps{\Psi}
\def\r{\rho}
\def\ta{\tau}
\def\La{\Lambda}
\def\Om{\Omega}
\def\d{{\partial}}
\def\db{\bar\d}
\def\ts{\textstyle}
\def\w{\wedge}
\def\-{\setminus}
\def\bl{\bullet}
\def\op{\oplus}
\def\ot{\otimes}
\def\ov{\overline}
\def\iy{\infty}
\def\t{\times}
\def\hf{{\frac{1}{2}}}
\def\nb{\nabla}
\def\sb{\subseteq}
\def\b{{\mathbf{b}}}
\title{Generalized Thomas--Yau Uniqueness Theorems}
\author{Yohsuke Imagi}
\begin{document}
\maketitle


\begin{abstract}
We generalize Thomas--Yau's uniqueness theorem \cite[Theorem 4.3]{TY} in two ways.
We prove a stronger statement for special Lagranigans and
include minimal Lagrangians in K\"ahler--Einstein manifolds
or more generally $J$-minimal Lagrangians introduced by Lotay and Pacini \cite{LP,LP2}.
In every case the heart of the proof is to make certain Hamiltonian perturbations.
For this we use the method by Imagi, Joyce and Oliveira dos Santos \cite[Theorem 4.7]{IJO}.
\end{abstract}
\section{Introduction}
In this paper we improve and generalize Thomas--Yau's theorem \cite[Theorem 4.3]{TY}.
Our first main result is the following.
For the more complete statement see Corollary \ref{2} (ii).
\begin{thm}\l{01}
Let $X$ be a K\"ahler manifold equipped with a holomorphic volume form.
Let the cohomology Fukaya category $H\cF(X)$ have two isomorphic objects supported near two closed irreducibly-immersed special Lagrangians $L_1,L_2$ respectively.
Then $L_1=L_2\sb X.$
\end{thm}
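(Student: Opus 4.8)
\noindent\emph{Proof strategy.} The plan is to convert the categorical hypothesis --- an isomorphism in $H\cF(X)$ --- into honest Lagrangian Floer data for $L_1$ and $L_2$ via the Hamiltonian perturbation method of \cite[Theorem 4.7]{IJO}, and then to run on that data the Thomas--Yau calibration estimate from \cite[Theorem 4.3]{TY}. First I would unwind the isomorphism: the objects supported near $L_i$ are represented by genuine Floer-theoretic objects $\Lti_i$ (small Hamiltonian perturbations, with bounding cochains and local systems, inside a Weinstein neighbourhood of the immersion of $L_i$), and the isomorphism furnishes degree-$0$ classes $\al\in HF^0(\Lti_1,\Lti_2)$, $\be\in HF^0(\Lti_2,\Lti_1)$ with $\mu^2(\be,\al)=\mathbf 1_{\Lti_1}$ and $\mu^2(\al,\be)=\mathbf 1_{\Lti_2}$. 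I would then check that this forces $[L_1]=[L_2]$ in $H_n(X)$ and, via the Maslov-and-angle bookkeeping for graded Lagrangians, that the two phase constants agree, $\th_1=\th_2=:\th_0$. Hence $L_1$ and $L_2$ are both calibrated by $\Re(e^{-i\th_0}\Om)$ --- each is homologically volume-minimizing and locally rigid within its homology class --- which is the geometric input for the rest.

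Second, and this is the heart, I would invoke \cite[Theorem 4.7]{IJO} to produce, for each small $\ep>0$, Hamiltonian perturbations $\Lti_1^\ep,\Lti_2^\ep$ of $L_1,L_2$ with: $\Lti_1^\ep\pitchfork\Lti_2^\ep$; the isomorphism preserved, so $HF^*(\Lti_1^\ep,\Lti_2^\ep)\cong H^*(L_1;\K)$ with $\al$ still a unit; and $\Lti_i^\ep\to L_i$ in $C^0$ as $\ep\to0$, with uniform energy bounds so that Gromov compactness sends sequences of $J$-holomorphic strips with boundary on $\Lti_1^\ep\cup\Lti_2^\ep$ to $J$-holomorphic discs or strips with boundary on $L_1\cup L_2$, with possible disc bubbling at the self-intersection points of the $L_i$.

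Third, I would run the calibration estimate. Since $L_1\cup L_2$ is calibrated by $\Re(e^{-i\th_0}\Om)$, a nonconstant $J$-holomorphic curve $u$ with $\d u\sb L_1\cup L_2$ has $\int u^*\om>0$; but the Lagrangian angle of each $L_i$ is the constant $\th_0$, so the Thomas--Yau computation --- comparing $\int u^*\om$ with the ``angle jumps'' of $u$ at its corners in $L_1\cap L_2$, controlled precisely because both angles are equal and constant --- gives $\int u^*\om\le0$, a contradiction (this is insensitive to immersed points, the angle being constant there too). So there is no such curve. As Hamiltonian perturbations have zero flux, the area of a $J$-holomorphic strip for $\Lti_1^\ep\cup\Lti_2^\ep$ differs from that of its $\ep\to0$ limit by $O(\ep)$; with the uniform energy bound this shows that for small $\ep$ every strip contributing to the Floer differential is constant, so $\#(\Lti_1^\ep\cap\Lti_2^\ep)=\dim_\K H^*(L_1;\K)$.

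Finally, with all holomorphic curves now constant (so the $A_\iy$-structure is formal), I would finish by a Thomas--Yau-type Floer argument: the invertibility $\mu^2(\be,\al)=\mathbf 1_{\Lti_1}$ forces $L_1\cap L_2\ne\es$, and comparing the graded structure of $HF^*(\Lti_1^\ep,\Lti_2^\ep)$ with $H^*(L_1;\K)$, together with the fact that $L_1,L_2$ are special of the \emph{same} phase --- which as in \cite[Theorem 4.3]{TY} rigidly constrains the Maslov degree at an intersection --- shows in the limit $\ep\to0$ that $L_1$ and $L_2$ have the same tangent space at each intersection point, hence agree near it by unique continuation, hence agree globally by connectedness. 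As $L_1$ is irreducibly immersed with $\dim L_1=\dim L_2$ this yields $L_1=L_2\sb X$ (irreducibility excludes $L_2$ being a nontrivial cover of, or a proper sub-immersion of the image of, $L_1$). The main obstacle is the second step: transplanting the \cite{IJO} perturbation technique to compact immersed special Lagrangians in a K\"ahler manifold with holomorphic volume form, from a merely cohomology-level isomorphism, while securing the uniform energy bounds and the control of disc bubbling at self-intersections needed for the Thomas--Yau estimate to survive the limit $\ep\to0$; granted that, the calibration inequality, the grading bookkeeping, and the final count are routine.
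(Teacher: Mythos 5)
Your overall plan (turn the isomorphism into Floer data, perturb by Hamiltonians, derive a geometric contradiction) is in the right spirit, but the three steps you lean on are each unsound, and the actual mechanism that makes the theorem work is absent. (a) The ``calibration estimate'' in your third step is false: for any nonconstant $J$-holomorphic curve $u$ with Lagrangian boundary one has $\int u^*\om>0$ because $\om$ tames $J$; Thomas--Yau prove no inequality of the form $\int u^*\om\le0$, and same-phase special Lagrangians genuinely do bound nonconstant holomorphic discs (special Lagrangian SYZ torus fibres, for instance). So you cannot conclude that all strips contributing to the differential become constant for small $\ep$, nor that $\#(\Lti_1^\ep\cap\Lti_2^\ep)=\dim H^*(L_1;\K)$. (b) Your final step --- ``same tangent space at an intersection point, hence agree near it by unique continuation'' --- fails: two distinct minimal or special Lagrangian submanifolds can be tangent at a point (hyperk\"ahler-rotate two distinct holomorphic curves that are tangent to finite order); strong unique continuation needs infinite-order contact or coincidence on an open set, which nothing in your argument supplies. (c) Citing \cite[Theorem 4.7]{IJO} as a black box producing transverse perturbations that ``preserve the isomorphism with uniform energy bounds'' is not what that method gives, and it skips exactly the hard point.

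What the paper does instead is a degree-exclusion argument, not a curve-counting one. From the isomorphism and the nonvanishing $HF^0(\b,\b)\ne0\ne HF^n(\b,\b)$ (the unit and the point class; Theorem \ref{ne0} and Corollary \ref{7}), the two special Lagrangians must have, up to shift, the same grading. Assuming $L_1\ne L_2$, Theorems \ref{Ham}, \ref{P1} and \ref{3.1} produce arbitrarily small Hamiltonian perturbations, underlying objects of $H\cF(X)$, that intersect transversally with \emph{no} intersection point of Maslov index $0$ or $n$: away from the set $S$ of common tangent planes this is an angle-sum argument (Lemma \ref{P0}), while near $S$ --- which is where your proposal is silent and where perturbing destroys specialness, so the naive index window $[1,n-1]$ is unavailable --- one writes $L_1$ as a graph of a closed $1$-form over $L_2$ and uses the \L{}ojasiewicz inequality (the \cite{IJO} method, requiring real analyticity, obtained in the $C^\iy$ K\"ahler setting by a Kodaira--Spencer/Moser perturbation of the K\"ahler structure plus McLean-type persistence of special Lagrangians, Theorem \ref{HamPert1}) to force the relevant Hessian to have eigenvalues of both signs. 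Then $HF^*(\b_1,\b_2)$ is supported in degrees $1,\dots,n-1$, contradicting $\b_1\cong\b_2$ and the nonvanishing in degrees $0$ and $n$; hence $L_1=L_2$. Without the index-exclusion statement near $S$ and this degree-theoretic contradiction, your argument does not close.
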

\begin{rem}
{\bf(i)}
Thomas and Yau prove their uniqueness theorem for closed special Lagrangians $L_1,L_2$ in the same Hamiltonian isotopy class.
But their proof works under the weaker hypothesis as above; that is, we need only that $L_1,L_2$ have the same isomorphism class in $H\cF(X).$
This $H\cF(X)$ is a (usual) associative category obtained from the $A_\iy$ category $\cF(X)$ by taking the ($\fm^1$) cohomology groups of the hom spaces in it.
The key fact is that if $L_1,L_2$ are Hamiltonian isotopic then $L_1,L_2$ will define isomorphic objects in $H\cF(X)$ (after given the additional data including bounding cochains).

It is clear that we need only the weaker hypothesis because Thomas and Yau use in their proof only the Floer cohomology group $HF^*(L_1,L_2),$ the hom space in $H\cF(X).$
But when Thomas and Yan wrote their paper, the theory of Fukaya categories was much less developed at that time, so the statement was presumably better-sounding with Hamiltonian isotopies. 
In this paper we improve Thomas--Yau's theorem by using the more developed theory of Fukaya categories as follows.

{\bf(ii)}
Theorem \ref{01} is stronger than the original theorem in the respect that $L_1,L_2$ need not have (even after shift) the same phase. This would also follow from the following statement: two Lagrangians underlying the same isomorphism class of objects in $H\cF(X)$ should have the same homology class in $X;$ although we shall not discuss this in the present paper. 
The fact that $L_1,L_2$ have the same shift up to shift will be proved in Corollary \ref{7} (before giving the full proof of the theorem), for which we shall use some simple fact about Maslov indices and some nonvanishing results for Floer cohomology groups (which we recall in \S\ref{sec3}). 

Also we do not suppose that either $L_1$ or $L_2$ itself underlies an object of $H\cF(X)$
nor do we suppose even that it is cleanly immersed as Fukaya \cite{Fuk2} does.
But we do suppose that we can perturb $L_1,L_2$ both to generically immersed Lagrangians
which underlie objects of $H\cF(X).$
This is how we deal with badly immersed Lagrangians in the Floer-theory context in this paper.

{\bf(iii)}
The idea of the proof of Theorem \ref{01} is the same as that of Thomas and Yau. But we include the modification by Imagi, Joyce and Oliveira dos Santos \cite[Theorem 4.7]{IJO} because we can unfortunately not justify the original Morse-theory argument \cite[Theorem 4.3]{TY}.
\end{rem}

Another natural generalization is to
minimal Lagrangians in K\"ahler--Einstein manifolds
or more generally to $J$-minimal Lagrangians introduced by Lotay and Pacini \cite{LP,LP2}.
At the moment we state the result only in the K\"ahler--Einstein case.
For the more complete statement see Corollary \ref{2} (i).
\begin{thm}\l{02}
Let $X$ be a $\Z_k$-graded K\"ahler--Einstein manifold of complex dimension $n\le k;$
and $L_1,L_2\sb X$ two closed irreducibly-immersed $\Z_k$-graded minimal Lagrangians with the same grading on $L_1\cap L_2.$
Let $\b_1,\b_2\in H\cF(X)$ be two objects supported respectively near $L_1,L_2$ and such that:
\ea
\l{11}
&\text{either $\b_1\cong\b_2$ or $k\ge 2n-1$ and $\b_1\cong\b_2$ up to shift; and}\\
\l{12}
&HF^i(\b_1,\b_1)\cong HF^i(\b_2,\b_2)\ne0\text{ for } i=0,n.
\ea
Then $L_1=L_2\sb X.$
\end{thm}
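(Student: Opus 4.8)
The plan is to reduce Theorem \ref{02} to a statement about a single Lagrangian — namely that a closed irreducibly-immersed $J$-minimal Lagrangian cannot bound a nonconstant holomorphic curve in a suitable weak sense — and then exploit the Floer-theoretic hypotheses \eqref{11}--\eqref{12} to produce exactly such a curve unless $L_1=L_2$. More precisely, I would first observe that by the irreducibility and minimality of $L_1,L_2$ together with the $\Z_k$-grading, each $L_i$ carries a well-defined phase-type function (the lift of the Lagrangian angle or its $J$-minimal analogue), and that the Lotay--Pacini theory of $J$-minimal Lagrangians gives a mean-curvature-flow-type monotonicity: along any holomorphic disc or strip with boundary on $L_i$, the phase is constrained, and a genuine (nonconstant) such curve would force a jump in grading incompatible with the hypothesis that $L_1\cap L_2$ carries a single grading. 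This is the geometric input replacing the special-Lagrangian calibration argument of Thomas--Yau.

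**Next I would** set up the Floer package. The objects $\b_1,\b_2$ are supported near generically-immersed Lagrangians $L_1',L_2'$ obtained by small Hamiltonian perturbation (as in the Remark following Theorem \ref{01}); the isomorphism $\b_1\cong\b_2$ (or $\b_1\cong\b_2$ up to shift when $k\ge 2n-1$) together with \eqref{12}, i.e. $HF^i(\b_i,\b_i)\ne 0$ for $i=0,n$, says that each $\b_i$ looks cohomologically like a sphere's worth of self-Floer cohomology in top and bottom degree — enough to run a degree/Euler-characteristic count. The key device is \cite[Theorem 4.7]{IJO}: it lets me replace the abstract objects by \emph{geometric} Hamiltonian perturbations of $L_1,L_2$ that still represent $\b_1,\b_2$ and for which the Floer differential is computed by honest pseudoholomorphic strips. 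I would choose the perturbation so that $L_1'$ and $L_2'$ intersect transversely and so that the unit/counit of the isomorphism $\b_1\cong\b_2$ are represented by low-energy strips; a neck-stretching or monotonicity estimate then shows that if $L_1\ne L_2$ these connecting strips have energy bounded below by a positive geometric constant, hence cannot both be invertible — contradicting $\b_1\cong\b_2$.

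**The treatment of the shift** is where hypothesis \eqref{11} enters quantitatively: an isomorphism only ``up to shift'' by some integer $s$ shifts the grading on $HF^\bullet(\b_1,\b_2)$, and the constraint $n\le k$ (together with $k\ge 2n-1$ in the shifted case) is exactly what forces $s\equiv 0$ in $\Z_k$ after matching the degrees $0$ and $n$ coming from \eqref{12}: the two nonzero classes pin down the grading ambiguity modulo $k$, and the numerical inequalities close the gap. Once the shift is killed, the honest (unshifted) isomorphism $\b_1\cong\b_2$ feeds into the energy argument of the previous paragraph. I would then invoke the single-Lagrangian rigidity: $\b_1\cong\b_2$ with no shift, combined with the grading agreement on $L_1\cap L_2$, forces the moduli space of strips realizing the isomorphism to be concentrated on constant strips, i.e. on $L_1\cap L_2$ itself, and a clean-intersection/cobordism argument upgrades this to $L_1=L_2$ as subsets of $X$.

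**The main obstacle** I anticipate is the passage from ``$\b_1\cong\b_2$ in $H\cF(X)$'' to honest geometric control: the Lagrangians $L_1,L_2$ are only assumed irreducibly immersed, not cleanly immersed, so a priori they need not themselves underlie Fukaya objects, and all Floer-theoretic statements must be made about the perturbations $L_i'$ while the minimality/$J$-minimality (hence the phase monotonicity) lives on the \emph{unperturbed} $L_i$. Bridging these two requires the careful perturbation-theoretic input of \cite[Theorem 4.7]{IJO} and a limiting argument as the Hamiltonian perturbation shrinks to zero: one must show the relevant pseudoholomorphic strips converge (in a Gromov sense, possibly with bubbling) to configurations with boundary on $L_1\cup L_2$ whose energy is still controlled by the $J$-minimal phase, and that no energy escapes into sphere or disc bubbles — this is precisely the ``heart of the proof'' flagged in the abstract, and I would expect to spend most of the work establishing the requisite compactness and energy-gap estimates in the irreducibly-immersed setting.
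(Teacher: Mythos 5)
There is a genuine gap, and it sits at the centre of your argument. Your contradiction is supposed to come from an energy gap: if $L_1\ne L_2$ then the strips realizing the unit/counit of $\b_1\cong\b_2$ have energy bounded below, ``hence cannot both be invertible.'' Over the Novikov ring this is simply not an obstruction: $\La$ contains $T^{\la}$ for all real $\la$ and these are invertible, so isomorphisms in $H\cF(X)$ are routinely realized by strips of strictly positive energy (any two distinct Hamiltonian-isotopic Lagrangians give isomorphic objects connected only by such strips). Likewise your geometric input --- a phase monotonicity for holomorphic discs/strips with boundary on a $J$-minimal Lagrangian forcing a ``jump in grading'' --- is asserted without any mechanism, and no statement of this kind is available in the Lotay--Pacini theory or needed in the actual proof. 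Finally, you describe \cite[Theorem 4.7]{IJO} as a device making the Floer differential computable by honest pseudoholomorphic strips; that theorem does nothing of the sort, and using it as such a black box leaves your compactness/energy-gap programme unsupported.

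What the paper actually proves, and what your outline never supplies, is an \emph{index} constraint rather than an energy constraint: if $L_1\ne L_2$, then (Theorem \ref{Ham} combined with Lemma \ref{P0}) one can Hamiltonian-perturb to $L_1',L_2'$ intersecting generically with every intersection point of Maslov index in $1,\dots,n-1$ modulo $k\Z$. Hence $HF^0(\b_1,\b_2)=HF^n(\b_1,\b_2)=0$, and the isomorphism $\b_1\cong\b_2$ (with the $k\ge 2n-1$ arithmetic killing the shift) contradicts the nonvanishing \eq{12}. The delicate analysis is near the set $S$ of points where $L_1,L_2$ share a tangent plane: there one writes $L_1$ over $L_2$ as the graph of a closed $1$-form $u$, uses the \L{}ojasiewicz inequality (this is where real analyticity and the IJO method genuinely enter --- automatic here by elliptic regularity in the K\"ahler--Einstein case) together with the fact that the $g$-Maslov-zero condition forces $|dd^*u|\les|u|_{C^1}$ (Lemma \ref{LP1}, Corollary \ref{Hcor}), and then adds a Morse perturbation $df$ so that the Hessian of $u-df$ at each intersection point has eigenvalues of both signs, i.e.\ index neither $0$ nor $n$. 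Away from $S$ the same-grading hypothesis and an angle-sum limit argument (Lemma \ref{P0}) exclude indices $0,n$ directly. Without this step the nonvanishing classes in degrees $0$ and $n$ constrain nothing, so your shift-killing paragraph and your final ``constant strips $\Rightarrow L_1=L_2$'' step have no foundation; the latter is in any case circular, since nothing you prove forces the strips to be constant.
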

\begin{rem}
The more precise meaning of $L_1,L_2$ having the same grading on $L_1\cap L_2$ is given in Definition \ref{dfn gr} below.
We make a more geometric definition of gradings than Seidel \cite{Sd1} does.
\end{rem}

In the circumstances of Theorem \ref{02},
if $c_1(X)>0$ there may be many automorphisms of $X$
and many zero objects of $H\cF(X)$ especially in the toric case as Fukaya, Oh, Ohta and Ono \cite{FOOOt} study.
So we can hardly expect a general uniqueness statement to hold.
But for instance, to the {\it Clifford torus} 
\e\l{CT}T^n:=\{[z_0,\cdots,z_n]\in\CP^n:|z_0|=\cdots=|z_n|\}\e
we can apply Theorem \ref{02} with the latter alternative of \eq{11}.
We can in fact take $k=2n+2$ as we show in Example \ref{ExCliff} below. 
Also according to Fukaya, Oh, Ohta and Ono \cite{FOOOt}
there are $(n+1)$ objects $\b\in H\cF(X)$ supported on $T^n$
with $HF^*(\b,\b)\cong H^*(T^n).$ 
So \eq{12} holds and we have:
\begin{cor}\l{3}
Let $L\sb\CP^n$ be a closed irreducibly-immersed $\Z_{2n+2}$-graded minimal Lagrangian such that $L,T^n$ have the same grading on $L\cap T^n.$
Let $H\cF(X)$ have an object which is supported near $L$ and isomorphic to one of the $(n+1)$ objects $\b$'s above $($all supported on $T^n).$
Then $L=T^n.$
\end{cor}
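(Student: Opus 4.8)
The plan is to derive Corollary \ref{3} as a special case of Theorem \ref{02}. First I would check that the hypotheses of Theorem \ref{02} are met with $X=\CP^n$, $k=2n+2$, $L_1=T^n$ and $L_2=L$. Since $\CP^n$ is K\"ahler--Einstein with $c_1>0$, it carries a natural $\Z_k$-grading for $k=2n+2$; the relevant verification is precisely the content of Example \ref{ExCliff}, which I would invoke to place the Clifford torus in the $\Z_{2n+2}$-graded minimal Lagrangian framework. The complex dimension is $n$, and the required inequality $n\le k=2n+2$ holds trivially; likewise $k=2n+2\ge 2n-1$, so the weaker ``up to shift'' alternative of \eqref{11} is available, which is exactly what we use. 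By hypothesis $L$ and $T^n$ are both closed irreducibly-immersed $\Z_{2n+2}$-graded minimal Lagrangians with the same grading on $L\cap T^n$, so that condition transfers directly.

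Next I would address \eqref{12}. Let $\b_2$ be the object supported on $T^n$ appearing in the statement. By the result of Fukaya--Oh--Ohta--Ono \cite{FOOOt} cited above, among the $(n+1)$ objects $\b$ supported on $T^n$ with $HF^*(\b,\b)\cong H^*(T^n)$ we may take $\b_2$ to be one of them, so $HF^i(\b_2,\b_2)\cong H^i(T^n)\ne 0$ for all $i$, in particular for $i=0,n$. Now let $\b_1$ be the object supported near $L$ that, by hypothesis, is isomorphic in $H\cF(X)$ to the object supported on $T^n$. If that latter object is $\b_2$ itself, then $\b_1\cong\b_2$ and \eqref{12} is immediate since Floer cohomology is an isomorphism invariant; if instead the object on $T^n$ that $\b_1$ matches differs from $\b_2$ by a shift, we still get $\b_1\cong\b_2$ up to shift, and in either case $HF^i(\b_1,\b_1)\cong HF^i(\b_2,\b_2)\ne 0$ for $i=0,n$. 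Thus both \eqref{11} (in its second, shifted, form) and \eqref{12} hold.

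Having verified all hypotheses, I would apply Theorem \ref{02} to conclude $L=T^n\sb\CP^n$, which is the assertion of Corollary \ref{3}. The only genuine work is bookkeeping: matching the vocabulary of the corollary (``one supported near $L$ and the other exactly on $T^n$'', ``two isomorphic objects'') to the precise hypotheses \eqref{11}--\eqref{12} of the theorem, and confirming via Example \ref{ExCliff} that $T^n$ really is $\Z_{2n+2}$-graded minimal. I expect the main subtlety to be the shift ambiguity: the corollary only asserts the two objects are isomorphic without specifying gradings, so one must be careful that this still feeds into \eqref{11} correctly --- but since $k=2n+2\ge 2n-1$, the theorem was designed to tolerate exactly this, so no obstacle remains.
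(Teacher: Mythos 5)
Your proposal is correct and follows essentially the same route as the paper: Corollary \ref{3} is obtained exactly by feeding Example \ref{ExCliff} (the $\Z_{2n+2}$-grading of $\CP^n$ and $T^n$), the Fukaya--Oh--Ohta--Ono toric computation $HF^*(\b,\b)\cong H^*(T^n)$ for condition \eqref{12}, and the latter (shift-tolerant) alternative of \eqref{11} into Theorem \ref{02}. Your closing remark about the shift ambiguity is precisely why the paper invokes the second alternative of \eqref{11} with $k=2n+2\ge 2n-1$, so nothing is missing.
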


On the other hand, if $c_1(X)\le0$ the nonzero condition \eq{12} holds automatically in certain circumstances.
For instance, if either $L_1$ or $L_2$ is embedded then \eq{12} holds automatically by
Fukaya, Oh, Ohta and Ono's theorem \cite[Theorem E]{FOOO}.
If $c_1(X)=0$ there is a stronger version of it which we use for Theorem \ref{01}. 

If $c_1(X)<0$ Theorem \ref{12} extends to $J$-minimal Lagrangians.
They are locally unique, so the result is interesting only in the global context.

For the proofs we follow in outline that by Thomas and Yau \cite[Theorem 4.3]{TY}.
In every case the heart of the proof is to
make certain Hamiltonian perturbations of the two Lagrangians.
The Floer-theory condition implies that the two perturbed Lagrangians have at least one intersection point of index $0$ or $n$
at which we do by analysis a sort of unique continuation.

But for the Hamiltonian perturbation process we use the method by
Imagi, Joyce and Oliveira dos Santos \cite[Theorem 4.7]{IJO} as mentioned above.
The modified method works only in the real analytic category.
This causes no problem in the K\"ahler--Einstein case because
in that case everything is automatically analytic by elliptic regularity.
On the other hand, in Theorem \ref{01} and in the $J$-minimal version of Theorem \ref{02}
we work in the $C^\iy$ category
and the real analyticity condition is achieved by another Hamiltonian-perturbation process.

We begin in \S\ref{sec2} with our geometric treatment of graded Lagrangians.
In \S\ref{sec2.5} we recall the relevant facts about Maslov forms in Lotay--Pacini's sense.
In \S\ref{sec3} we give a minimum account of Floer theory which we use in this paper. 
In \S\ref{sec4} we carry out the key step of making Hamiltonian perturbations.
In \S\ref{sec5} we state and prove all our results.
They are essentially at the chain level because \S\ref{sec4} is independent of Floer theory.


Finally we remark that Thomas--Yau's uniqueness theorem is just a bit of their whole proposal \cite{TY} which is now improved by Joyce \cite{Jc}.

{\it Acknowledgements.}
The author would like to thank Mohammed Abouzaid and Kenji Fukaya for helpful conversations.
This work was supported financially by the National Natural Science Foundation of China (11950410501).

\section{Geometric Gradings}\l{sec2}
We make a geometric definition of graded Lagrangians:
\begin{dfn}\l{dfn gr}
Let $(X,\om)$ a symplectic manifold, $J$ an $\om$-compatible almost-complex structure on $X$
and $K_X$ the canonical bundle over $(X,J).$
Let $k$ be either a positive integer or infinity, write $\Z_k:=\Z/k\Z$ for $k$ finite and write $\Z_\iy:=\Z.$
By a {\it $\Z_k$-grading} of $(X,\om,J)$ we mean for $k<\iy$ the pair of a complex line bundle $K_X^{2/k}$ and a bundle isomorphism $(K_X^{2/k})^k\cong K_X^2,$
and for $k=\iy$ a nowhere vanishing section $\Ps$ of $K_X^2.$ 
Suppose given such a $\Z_k$-grading of $X$ and a Hermitian metric $g$ on $(X,J).$
Then for every $x\in X$ and for every Lagrangian plane $\ell\sb T_xX$
there is a $g$-orthogonal decomposition $T_xX=\ell\op J\ell$ and accordingly
a canonical $g$-unit section of $K_X^2$ which we denote by $\Om_\ell^2;$
that is, if $e_1,\cdots,e_n\in\ell$ are a $g$-orthonormal basis then 
\[\Om_\ell^2:=\pm e^1\w\cdots\w e^n\w Je^1\w\cdots\w Je^n.\]
There is also a canonical $k$-fold cover $\Lag^kTX$ of the Lagrangian Grassmannian $\Lag TX$ defined for $k<\iy$ by
\[\Lag^kTX:=\{(\ell,\al)\in\Lag TX\t_X K_X^{2/k}:\Om_\ell^2=\al^k\}\]
and for $k=\iy$ by $\Lag^\iy TX:=\{(\ell,\ph)\in\Lag TX\t_X \R:\Om_\ell^2=e^{i\ph}\Ps\}.$ 
By a $\Z_k$-grading of an immersed Lagrangian $L\sb (X,\om)$
we mean a lift to $\Lag^kTX$ of the tangent-space map $L\to \Lag TX,$
that is, for $k<\iy$  a section $\al:L\to K_X^{2/k}$ with $\al^k=\Om_L^2$
and for $k=\iy$ a section $\ph:L\to S^1$ with $e^{i\ph}\Ps=\Om_L^2.$
It is unique up to $\Z_k$-shifts,
where the shift $[1]$ is defined for $k<\iy$ by $(L,\al)[1]:=(L,e^{2\pi i/k}\al)$
and for $k=\iy$ by $(L,\ph)[1]:=(L,\ph+2\pi).$
\end{dfn}
\begin{rem}
The advantage of this definition is that given two Lagrangians $L_i$ $(i=1,2)$ graded by $\al_i$ or $\ph_i$ as above,
we can compare the two gradings $\al_i$ or $\ph_i.$
In particular, it makes sense to say that they are equal or not.
\end{rem}

We make another definition in the circumstances of Definition \ref{dfn gr}:
\begin{dfn}
For $i=1,2$ let $L_i\sb(X,\om)$ be a Lagrangian with a $\Z_k$-grading $\al_i:L\to K_X^{2/k},$
$k<\iy.$
Suppose $L_1,L_2$ intersect transversely at a point $x\in L_1\cap L_2.$
We define the Maslov index $\mu_{L_1,L_2}(x)\in\Z_k.$
Define $e^{i\ta}\in S^1$ by $\al_2=e^{i\ta}\al_1\in K_X^{2/k}$ over $x.$
Take an isomorphism $(T_xX,J|_x,g|_x)\cong\C^n$
which maps $T_xL_1$ to $\R^n\sb\C^n$ and $T_xL_2$ to
$\{(e^{i\th_1}x_1,\cdots,e^{i\th_n}x_n)\in\C^n:x_1,\cdots,x_n\in\R\}.$
for some $\th_1,\cdots,\th_n\in(0,\pi).$
These $\th_1,\cdots,\th_n$ are unique up to order
and we have
$\Om_{L_2}^2=\pm e^{-2i(\th_1+\cdots+\th_n)}\Om_{L_1}^2.$
So we can define
\begin{equation}
\l{mu}\mu_{L_1,L_2}(x):=\frac{1}{2\pi}(2\th_1+\cdots+2\th_n+k\ta) \in\Z \text{ modulo }k\Z.
\end{equation}
If $k=\iy$ and if for $i=1,2$ the $L_i$ is $\Z$-graded by $\ph_i:L_i\to \R$ then
\e\l{mu2}\mu_{L_1,L_2}(x):=\frac{1}{2\pi}(2\th_1+\cdots+2\th_n-\ph_1(x)+\ph_2(x)) \in\Z.\e
\end{dfn}

We recall now the notion of {\it special} Lagrangians: 
\begin{dfn}\l{dfn slag}
Let $(X,\om)$ a symplectic manifold, $J$ an $\om$-compatible almost-complex structure on $X$
and $\Ps$ a $\Z$-grading of $(X,\om,J).$
We say that an immersed Lagrangian in $(X,\om,J,\Ps)$ is {\it special} if it is $\Z$-graded by a constant.
\end{dfn}
\begin{rem}
In the symplectic context including this definition and \S\ref{sec3} we do not need $J$ to be integrable.
But in the more geometric context we suppose $J$ integrable
and Harvey--Lawson's theorem applies as in Theorem \ref{HL}.
\end{rem}
We prove a lemma which we use in Corollary \ref{7}:
\begin{lem}\l{6.9}
Let $(X,\om,J,\Ps)$ be such as in Definition \ref{dfn slag}
and $L_1,L_2\sb X$ two mutually-transverse special Lagrangian submanifolds which have not even after shifts the same grading.
Then there exists $i\in\Z$ such that the Maslov indices of $L_1,L_2$ all belong to $[i,i+n-1].$
This will still hold under Lagrangian perturbations of $L_1,L_2.$
\end{lem}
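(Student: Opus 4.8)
The plan is to read the Maslov indices straight off the defining formula \eqref{mu2}. Since $L_1,L_2$ are special, by Definition \ref{dfn slag} they are $\Z$-graded by constants, say $\ph_1\equiv c_1$ and $\ph_2\equiv c_2$ with $c_1,c_2\in\R$. At a point $x\in L_1\cap L_2$ where the two meet transversely, \eqref{mu2} becomes $2\pi\,\mu_{L_1,L_2}(x)=2\th_1(x)+\cdots+2\th_n(x)+(c_2-c_1)$, with $\th_1(x),\dots,\th_n(x)\in(0,\pi)$ the characteristic angles of $T_xL_2$ relative to $T_xL_1$. As each $2\th_j(x)$ lies in $(0,2\pi)$, every Maslov index therefore lies in the one fixed open interval $I:=\bigl(\tfrac{c_2-c_1}{2\pi},\,n+\tfrac{c_2-c_1}{2\pi}\bigr)$, which has length $n$.

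Next I would invoke the hypothesis: that $L_1,L_2$ have not even after shifts the same grading means exactly that $c_1-c_2\notin2\pi\Z$, i.e.\ the endpoints of $I$ are not integers. An open interval of length $n$ with non-integer endpoints contains precisely the $n$ consecutive integers $i,i+1,\dots,i+n-1$, where $i:=\lceil(c_2-c_1)/2\pi\rceil$. Since every $\mu_{L_1,L_2}(x)$ is by \eqref{mu2} an integer lying in $I$, it lies in $[i,i+n-1]$, which is the first assertion.

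For the last clause I would argue by $C^0$-closeness. A special Lagrangian has vanishing Maslov class with respect to $\Ps$ — its grading is constant, hence closed — and the Maslov class is invariant under Lagrangian isotopy; so a sufficiently small Lagrangian perturbation $L_i'$ of $L_i$ still admits a $\Z$-grading $\ph_i'$, which I normalise to the one obtained by continuation, so that $\Vert\ph_i'-c_i\Vert_{C^0}$ is small. Repeating the computation at a transverse point $x'\in L_1'\cap L_2'$ — the angles are still in $(0,\pi)$ by transversality — traps $\mu_{L_1',L_2'}(x')$ in $\bigl(\tfrac{\ph_2'(x')-\ph_1'(x')}{2\pi},\,n+\tfrac{\ph_2'(x')-\ph_1'(x')}{2\pi}\bigr)$; and since $(c_2-c_1)/2\pi\notin\Z$ while $\ph_i'$ is $C^0$-close to $c_i$, for a small enough perturbation $(\ph_2'(x')-\ph_1'(x'))/2\pi$ stays in the same open unit interval $(i-1,i)$ for every $x'$, so this new length-$n$ interval still traps exactly $i,\dots,i+n-1$ and the conclusion holds with the same $i$. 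The one point to watch is precisely this book-keeping — keeping the endpoints of the relevant length-$n$ interval off $\Z$ so that no integer slides in or out — together with the preliminary check that the perturbed Lagrangians remain gradable; beyond that the lemma is a direct substitution into \eqref{mu2}.
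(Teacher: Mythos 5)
Your proof is correct and follows essentially the same route as the paper's: substitute the constant gradings of the special Lagrangians into \eqref{mu2}, observe that the $n$ angles trap each index in an open length-$n$ interval whose endpoints are non-integral precisely because the gradings disagree even after shifts (so exactly the integers $i,\dots,i+n-1$ can occur, with $i$ independent of the intersection point), and note that this persists under small Lagrangian perturbations. Your write-up in fact supplies details the paper leaves implicit, namely the normalization $c_1-c_2\notin2\pi\Z$ and the gradability plus $C^0$-continuation of the gradings after perturbation.
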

\begin{proof}
Let $\ph_1,\ph_2\in\R$ be gradings of $L_1,L_2.$
Then $\ph-\ph'\notin\Z$ and this condition is preserved under Lagrangian perturbations of $L_1,L_2.$ 
Let $x\in L_1\cap L_2$ and let $\th_1,\cdots,\th_n\in(0,\pi)$ be as in \eq{mu}.
Then by \eq{mu2} we have $\mu_{L_1,L_2}(x)\in[i,i+n-1]$ for some $i\in\Z.$
This $i$ is independent of $x$ which completes the proof. 
\end{proof}

We prove another lemma which we use in \S\ref{sec5}:
\begin{lem}\l{P0}
Let $(X,\om)$ be a symplectic manifold of real dimension $2n,$ $\Z_k$-graded with $k\ge n,$
equipped with a compatible almost complex-structure $J$ and equipped with a Hermitian metric $g.$
Let $L_1,L_2\sb X$ be two closed immersed $\Z_k$-graded Lagrangians with the same grading on $L_1\cap L_2.$
Denote by $S\sb L_1\cap L_2$ the set of points at which $L_1,L_2$ have at least one common tangent space.
Then for every open neighbourhood $U\sb X$ of $S$
there exists $\ep>0$ such that every Hamiltonian $\ep$-perturbation of $L_2$ that intersects $L_1$ generically $($that is, only at transverse double points$)$
has no intersection point with $L_1\cap U$ of index $0$ or $n$ modulo $k\Z.$
\end{lem}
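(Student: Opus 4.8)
The plan is to analyze the Maslov index $\mu_{L_1,L_2}(x)$ near a point $x\in S$ and show that a small Hamiltonian perturbation forces it away from $0$ and $n$ modulo $k$. The starting observation is a continuity/semicontinuity argument: since $L_1,L_2$ have the same $\Z_k$-grading along $L_1\cap L_2$, the phase term $\ta$ in formula \eqref{mu} vanishes at every point of $L_1\cap L_2$; hence near such a point $\mu_{L_1,L_2}$ is governed, up to a controllably small error, by $\frac{1}{\pi}(\th_1+\cdots+\th_n)$ where $\th_1,\dots,\th_n\in(0,\pi)$ are the Lagrangian angles of $T_xL_2$ relative to $T_xL_1$. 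At a point $p\in S$ at least one of these angles degenerates to $0$ (the common tangent direction), so before perturbation the ``index'' $\frac1\pi\sum\th_j$ lies in the open interval $(0,n-1)\subset(0,n)$ in $\R$, in particular it is bounded away from $0$ and from $n$ by a definite amount depending only on the geometry near $S$.

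First I would make this quantitative. Fix the open neighbourhood $U\supseteq S$; by shrinking $U$ I may assume $\ov U$ is compact and that on $\ov U\cap(L_1\cap L_2)$ the grading difference $\ta$ is exactly $0$, hence by continuity $|\ta|$ stays small on a slightly larger tube. The key geometric input is that the symmetric functions of the Lagrangian angles $\th_j$ depend continuously on the pair of Lagrangian planes, and at a point where the two planes share a common line the largest angle is $\le(n-1)\pi$ in the sense that $\sum\th_j<(n-1)\pi$ strictly (one angle is $0$, the rest are $<\pi$). A compactness argument over $\ov U\cap(L_1\cap L_2)$ — or rather over the relevant portion of the fibre product of tangent Grassmannians — then yields a uniform $\de>0$ with $\sum_j\th_j\le(n-1)\pi-\de$ at every such configuration, and by continuity $\sum_j\th_j\le(n-1)\pi-\de/2$ on the whole tube, with the contribution $k\ta/2$ also $<\de/4$ in absolute value there.

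Next I would perform the Hamiltonian perturbation. A Hamiltonian $\ep$-perturbation of $L_2$ moves $T_xL_2$ by at most $O(\ep)$ in the Grassmannian and changes $\ta$ by at most $O(\ep)$, so the perturbed Maslov index at any transverse intersection point $x\in L_1\cap U$ satisfies, as a representative in $\R$ of its class mod $k$,
\[
\mu(x)=\tfrac1\pi\sum_j\th_j+\tfrac{k}{2\pi}\ta \in \bigl(c\ep,\ (n-1)+ c\ep\bigr)\subset(0,n)
\]
for a constant $c$ independent of $\ep$, once $\ep$ is small enough that $c\ep<\de/4$. Since this representative lies strictly inside the open interval $(0,n)$ of length $n\le k$, its class modulo $k\Z$ is neither $0$ nor $n$; this is exactly where the hypothesis $k\ge n$ is used, to guarantee that $0$ and $n$ have no other representatives in $(0,n)$. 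Choosing $\ep$ uniformly (via the compactness of $\ov U$) completes the argument.

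The main obstacle I expect is making the ``moves by $O(\ep)$'' claims precise and uniform: one must fix once and for all what ``Hamiltonian $\ep$-perturbation'' measures (a $C^1$-bound on the generating Hamiltonian, say), check that this controls both the displacement of tangent planes and the variation of the grading section $\al_2$ along $L_2$, and verify that the implied constants can be taken uniform over the compact set $\ov U$ rather than depending on the individual intersection point. The degeneracy of the configuration at points of $S$ (where $L_1,L_2$ need not meet transversally, so $x$ is not literally an intersection point of the perturbed pair) has to be handled by working on a tubular neighbourhood and using that transverse double points of the perturbed $L_2$ with $L_1$ inside $U$ necessarily lie $O(\ep)$-close to $S$, so the above estimates apply at them.
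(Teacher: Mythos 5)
Your proposal goes wrong at the start, in two linked ways. First, about what the lemma actually asserts: as the paper's own proof (which takes $x^i\in L_1\cap L_2^i\setminus U$) and its use in Theorem \ref{P1} (``applying Lemma \ref{P0} to the intersection points outside $U$ and Theorem \ref{Ham} to those in $U$'') make clear, Lemma \ref{P0} excludes index-$0$/$n$ intersection points \emph{away} from $S$, i.e.\ outside the neighbourhood $U$; the ``$L_1\cap U$'' in the statement is a slip for ``$L_1\setminus U$''. Second, and mathematically more serious, your key estimate near $S$ is false. A point of $S$ is one where a branch of $L_1$ and a branch of $L_2$ share the \emph{whole} tangent plane, not merely a tangent line; there all $n$ angles degenerate, and arbitrarily small Hamiltonian perturbations of $L_2$ produce transverse intersection points whose angle sum is as close to $0$ or to $n\pi$ as you like --- e.g.\ already for $L_1=L_2$ (which the lemma does not exclude), perturbing by the graph of $df$ with $f$ a $C^2$-small Morse function gives intersection points of every Morse index, in particular index $0$ at minima and $n$ at maxima. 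So no statement of the form ``every $\epsilon$-perturbation has no index-$0$/$n$ point near $S$'' can hold; excluding such points near $S$ is exactly the content of the much harder Theorem \ref{Ham}, where the perturbation is \emph{chosen} using the \L{}ojasiewicz inequality, and your reading would make that theorem redundant. Even within your own reading, membership of $\frac1\pi\sum_j\theta_j$ in an open interval $(0,n-1)$ gives no uniform bound away from $0$, and the lower bound $c\epsilon$ asserted in your final display is never justified.

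The correct mechanism, which your continuity discussion gestures at but applies in the wrong region, is the paper's compactness/contradiction argument outside $U$: if there were perturbations $L_2^i\to L_2$ transverse to $L_1$ with intersection points $x^i\in L_1\cap L_2^i\setminus U$ of index $0$ or $n$, pass to a subsequential limit $x\in L_1\cap L_2\setminus U$. Because the gradings agree on $L_1\cap L_2$, the phase $\tau^i$ at $x^i$ can be taken with $|\tau^i|<2\pi/k$ and $\tau^i\to0$, and the index condition reads $2\theta_1^i+\cdots+2\theta_n^i+k\tau^i\equiv 0$ or $2n\pi$ modulo $2k\pi$; since $k\ge n$ and $2\sum_j\theta_j^i\in(0,2n\pi)$, the congruence holds on the nose, so the angle sums converge to $0$ or $n\pi$, i.e.\ all angles tend to $0$ or all to $\pi$. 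Hence at $x$ the two branches have a common tangent plane, so $x\in S\subseteq U$, contradicting $x\notin U$. This is where ``same grading'' and ``$k\ge n$'' enter, much as you intended, but the conclusion is that index-$0$/$n$ points are confined to $U$, not excluded from it.
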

\begin{proof}
If this fails there are an open neighbourhood $U\sb X$ of $S,$
a sequence of Hamiltonian perturbations $L_2^i$ of $L_2$ all transverse to $L_1$ and tending smoothly to $L_2,$
and a sequence of intersection points $x^i\in L_1\cap L_2^i\-U$ of index $0$ or $n.$
By hypothesis $L_1\cap L_2^i\-U$ is compact, so there is a subsequence of $x^i$ tending to some point $x\in L_1\cap L_2\-U.$
Suppose now that $k<\iy;$
the other case $k=\iy$ may be treated in the same manner.
Give $L_2^i$ a grading $\al^i$ by the obvious homotopy,
and write $\al^i=\exp(\sqrt{-1}\ta^i)\al$ over $x^i.$
By hypothesis $L_1,L_2$ have the same grading at $x^i$ so we may suppose that for each $i$ we have $|\ta^i|<2\pi/k$ and the sequence $\ta^i$ tends to $0$ as $i\to\iy.$
Denote by $\th_1^i,\cdots,\th_n^i\in(0,\pi)$ the $n$ angles between the two tangent spaces at $x^i.$
Then
$2\th_1^i+\cdots+2\th_n^i+k\ta^i=0\text{ or }n\pi\text{ modulo }2k\pi\Z.$
In fact, since $k\ge n$ it follows that this holds without modulo $2k\pi\Z.$
So the limits of the $n$ angles sum up to $0$ or $n\pi;$
that is, at the limit point $x$ there is a common tangent space to $L_1,L_2.$
But this contradicts the definition of $S.$
\end{proof}

\section{Maslov Forms}\l{sec2.5}
Following Lotay and Pacini \cite{LP2} we make:
\begin{dfn}\l{Mdfn}
Let $(X,\om,J)$ be a K\"ahler manifold of complex dimension $n$
and $K_X$ the canonical bundle over $(X,J).$
Let $L\sb (X,\om)$ be a Lagrangian submanifold, and $g$ a Hermitian metric on $(X,J)$
which need not be the K\"ahler metric of $(X,\om,J)$ nor even K\"ahler.
As in Definition \ref{dfn gr} take the canonical $g$-unit section $\Om_L^2$ of $K_X^2$ over $L.$
Denote by $\nb$ the Chern connection on $(X,J,g)$
and by $A$ the real $1$-form on $K_X^2$ over $L$ defined by
$\nb\Om_L^2=2iA\ot\Om_L^2.$
We call $A$ the {\it Maslov form} on $L$ relative to $g$
and say that $L$ is {\it $g$-Maslov-zero} if $A\=0.$
\end{dfn}
Here it is not essential that $J$ is integrable. Many results by Lotay and Pacini \cite{LP,LP2} hold with $J$ an $\om$-compatible almost-complex structure
and with $\nb$ a connection such that $\nb J=\nb g=0.$
But it is convenient for us to suppose $J$ integrable. We can then take the Chern connection; in \S\ref{sec4} we can use the underlying real analytic structure of $(X,J)$
which will be convenient for us to state our results; and there are as in Theorem \ref{HamPert1} nice deformation theory results for special Lagrangians and $J$-minimal Lagrangians which we use in Corollary \ref{3.1}.

As Lotay and Pacini \cite[Theorem 2.4]{LP} prove, if $g$ is K\"ahler the $g$-Maslov-zero condition and the $J$-minimal condition are equivalent.
If also $L$ is $g$-Lagrangian---that is, $g(Jv,v')=0$ for any
$v,v'\in TL$---then $L$ is $J$-minimal if and only if $L$ is $g$-minimal in the ordinary sense.

We work with $g$-Maslov-zero Lagrangians rather than with $J$-minimal Lagrangians because
for special Lagrangians we can take $g$ to be merely {\it conformally} K\"ahler as we recall now.
Let $(X,\om,J)$ a K\"ahler manifold and $\Ps$ a $J$-holomorphic volume form on $X.$
Joyce \cite{Jb} and other authors call $(X,\om,J,\Ps)$ an almost Calabi--Yau manifold but this does not mean that $J$ is merely an almost complex structure;
$J$ is integrable. It means that $\om$ is Andnot necessarily Ricci-flat.
Denote by $g$ the conformally K\"ahler metric on $(X,J)$ associated with $\ps^2\om$ where
$\ps:X\to\R^+$ is defined by
\e\l{ps}\ps^{2n}\om^n/n!=(-1)^{n(n-1)/2}(i/2)^n\Ps\w\ov\Ps.\e
Then for every Lagrangian submanifold $L\sb X$ the canonical $g$-unit section $\Om_L^2$
is of the form $\al\Ps^2|_L$ where $\al:L\to S^1$ is a smooth function.
This $L$ has zero Maslov $1$-form if and only if $\al$ is constant, that is, if and only if $L$ is special.
In these circumstances Harvey--Lawson's theorem \cite{HL} may be stated as follows:
\begin{thm}\l{HL}
If $L$ is special then $L$ is orientable and {\it calibrated} by $\sqrt\al\Ps$ in Harvey--Lawson's sense
where the choice of $\sqrt\al$ corresponds to the orientation of $L.$
Furthermore, $L$ is area-minimizing with respect to $g$ and $J$-minimal in $(X,J,g).$
\end{thm}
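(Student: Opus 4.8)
The plan is to recognize this as the classical Harvey--Lawson calibration theorem, transported through the conformal rescaling in \eqref{ps} and through the grading conventions of Definition \ref{dfn gr}; concretely, the three claims amount to saying that $\re(\sqrt\al\,\Ps)$ is a calibration for $g$ which calibrates $L$.

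First I would reformulate the hypothesis. By Definition \ref{dfn slag}, $L$ being special means that the map $\al\colon L\to S^1$ determined by $\Om_L^2=\al\Ps^2|_L$ is constant; fix a square root $\sqrt\al\in S^1$. Restricting to a point $x\in L$ and unwinding Definition \ref{dfn gr}: as an element of $(\Lambda^nT_x^*L)^{\otimes2}$ the section $\Om_L^2$ is the square of a $g$-unit covolume on $T_xL$, while $\Ps|_{T_xL}$ is a complex multiple of the same unit covolume since a Lagrangian plane is totally real; comparing, $\Om_L^2=\al\Ps^2|_L$ forces $\sqrt\al\,\Ps|_{T_xL}=\pm\vol_{g,T_xL}$. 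Hence $\sqrt\al\,\Ps|_L$ is a smooth nowhere-vanishing real $n$-form on (the source of) $L$, which gives orientability; I orient $L$ by declaring $\sqrt\al\,\Ps|_L>0$, so that $\re(\sqrt\al\,\Ps)|_L=\sqrt\al\,\Ps|_L=\vol_{g,L}$, and the opposite root $-\sqrt\al$ yields the opposite orientation, which is the asserted correspondence.

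Next I would verify that $\re(\sqrt\al\,\Ps)$ is a calibration for $g$. It is closed because $\Ps$ is $J$-holomorphic, so $d\Ps=\d\Ps+\db\Ps=0$. The definition \eqref{ps} of $\ps$ is arranged precisely so that the Hermitian metric $g$ of $\ps^2\om$ obeys the Calabi--Yau normalization $(\ps^2\om)^n/n!=(-1)^{n(n-1)/2}(i/2)^n\Ps\w\ov\Ps$ pointwise; thus at every $x$ the datum $(T_xX,J,g_x,\Ps_x)$ is a standard Calabi--Yau vector space, and the pointwise comass computation of Harvey and Lawson \cite{HL} carries over verbatim: the comass of $\re(\sqrt\al\,\Ps)$ relative to $g$ is $\le1$, with equality on an oriented $n$-plane $V\sb T_xX$ exactly when $V$ is $\om$-Lagrangian and correctly oriented. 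Together with the previous paragraph this says that $\re(\sqrt\al\,\Ps)$ calibrates $L$. Area-minimality is then formal: for a competitor $L'$ homologous to $L$ (compactly supported, rel boundary), $\vol_g(L)=\int_L\re(\sqrt\al\,\Ps)=\int_{L'}\re(\sqrt\al\,\Ps)\le\vol_g(L')$ by the calibration equality on $L$, Stokes and closedness, and the comass bound on $L'$; so $L$ minimizes area with respect to $g$ (on small embedded pieces in the immersed case), and in particular $L$ is $g$-minimal. Finally $L$ is $g$-Lagrangian, since $g(Jv,v')=(\ps^2\om)(Jv,Jv')=\ps^2\om(v,v')=0$ for $v,v'\in TL$ as $L$ is $\om$-Lagrangian; combining the fact noted before the theorem that special Lagrangians are $g$-Maslov-zero with Lotay--Pacini's identification of $J$-minimality with $g$-Maslov-zeroness and, for $g$-Lagrangian submanifolds, with honest $g$-minimality \cite{LP,LP2} (see \cite[Theorem 2.4]{LP}), I conclude that $L$ is $J$-minimal in $(X,J,g)$.

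The main obstacle I anticipate is bookkeeping rather than conceptual: rendering the $K_X^2$-valued gradings of Definition \ref{dfn gr} in the classical $\re\Ps$ picture precisely enough to pin down the orientation statement with the correct sign, and checking that the form of Lotay--Pacini's equivalence invoked in the last step is available for the merely conformally K\"ahler metric $g$ rather than only for a genuinely K\"ahler metric. The one substantive analytic ingredient --- the pointwise comass estimate for $\re\Ps$ --- is classical and unaffected by the rescaling, since the rescaling is designed exactly to restore the Calabi--Yau normalization at each point.
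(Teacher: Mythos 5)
Your proposal is correct and takes essentially the approach the paper relies on: the paper states this result as a direct restatement of Harvey--Lawson's theorem (cited rather than reproved), and your argument --- the normalization \eqref{ps} restoring the pointwise Calabi--Yau linear-algebra model, the comass bound for $\re(\sqrt\al\,\Ps)$, the Stokes argument for $g$-area-minimization, and then $J$-minimality --- is exactly that classical route transported through the conformal rescaling. On the one point you flag, since $g$ is only conformally K\"ahler you should conclude $J$-minimality from the route you already have in hand (namely that $L$ is $g$-Lagrangian and $g$-minimal, and the $J$-volume dominates the Riemannian volume with equality on $g$-Lagrangian submanifolds) rather than from \cite[Theorem 2.4]{LP}, which is stated for K\"ahler $g$.
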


Lotay and Pacini \cite[Proposition 4.5]{LP2} prove a formula for the Maslov $1$-form $A$ and the $J$-mean-curvature vector $H_J$:
\e\l{AH}-JA=(H_J+T_J)\lrcorner g\e
where $T_J$ comes from the torsion of the Chern connection $\nb.$
If $L$ is special then by definition we have $A=0$ and $H_J$ is the ordinary mean curvature which is also zero. So by \eq{AH} we have $T_J=0$ too.
Conversely, if $L$ is merely $J$-minimal then $T_J$ may be nonzero.
This justifies us working with $g$-Maslov-zero Lagrangians rather than $J$-minimal Lagrangians.

We compute now the {\it relative first Chern class} $2c_1(X,L)\in H^2(X,L;\Z)$ for $g$-Maslov-zero Lagrangians $L\sb X.$
We recall therefore:
\begin{lem}
Let $(X,\om)$ be a symplectic manifold, $J$ an $\om$-compatible complex structure,
$K_X$ the canonical bundle over $(X,J)$
and $L\sb(X,\om)$ a Lagrangian submanifold.
Then the relative de-Rham class $2c_1(X,L)\in H^2(X,L;\R),$ which is integral, may be represented by the curvature $2$-form of any connection on $K_X^2$
that is flat over $L.$
\end{lem}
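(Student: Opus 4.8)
The plan is to exhibit a connection on $K_X^2$ that is flat over $L$ and whose curvature represents $2c_1(X,L)$ in de Rham cohomology, then to identify that curvature with the usual representative of $2c_1(X)$ away from $L$. Since $L$ is Lagrangian, the canonical $g$-unit section $\Om_L^2$ of $K_X^2$ over $L$ is everywhere nonzero, so $K_X^2|_L$ is a trivial complex line bundle; pick any Hermitian connection $\nb'$ on $K_X^2$ over all of $X$ and modify it near $L$ by a $1$-form so that $\Om_L^2$ becomes parallel along $L$. Concretely, writing $\nb'\Om_L^2 = 2iA'\ot\Om_L^2$ for a real $1$-form $A'$ on $L$, extend $A'$ to a real $1$-form on a neighbourhood of $L$ in $X$, and set $\nb := \nb' - 2iA'$; this is still a Hermitian connection on $K_X^2$ and is flat over $L$ because $\nb\Om_L^2 = 0$ there forces the curvature, restricted to $TL\w TL$, to vanish. (One uses here that $L$ is Lagrangian of maximal dimension, so $TL$ spans enough of $T X|_L$; more precisely flatness over $L$ means the curvature pulled back to $L$ is zero, which is immediate from the existence of the parallel nonvanishing section $\Om_L^2$.)

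Next I would check that the curvature $2$-form $F_\nb$ of any such connection represents $2c_1(X,L)$ in $H^2(X,L;\R)$. Recall $c_1(X) = c_1(K_X^{-1}) = -c_1(K_X)$, so $2c_1(X)$ is represented by $\tfrac{i}{\pi}$ times the curvature of any Hermitian connection on $K_X^2$ — in particular by $\tfrac{i}{\pi}F_{\nb}$ viewed as a closed $2$-form on $X$. The point is that when $F_\nb$ pulls back to zero on $L$, the pair $(\tfrac{i}{\pi}F_\nb, 0)$ defines a class in the relative de Rham cohomology $H^2(X,L;\R)$ which maps to $2c_1(X)\in H^2(X;\R)$ under the natural map, and by definition of the relative first Chern class this relative class is exactly $2c_1(X,L)$. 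The integrality statement is part of the classical definition of $c_1(X,L)$ as the image of an integral class under $H^2(X,L;\Z)\to H^2(X,L;\R)$, and needs no proof beyond invoking it.

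Finally, independence of the choice of connection is the standard Chern–Weil argument: if $\nb_0,\nb_1$ are two Hermitian connections on $K_X^2$, both flat over $L$, then $\nb_1-\nb_0 = -2i\,a$ for a real $1$-form $a$ on $X$ with $a|_L = 0$ (since both make $\Om_L^2$ parallel along $L$, up to a locally constant phase that contributes nothing), and $F_{\nb_1}-F_{\nb_0} = -2i\,da$ with $a$ vanishing on $L$; hence the relative classes agree. I expect the only genuinely delicate point to be the bookkeeping in the first step — producing a global Hermitian connection that is \emph{exactly} flat (not merely flat in a cohomological sense) over $L$, which hinges on $\Om_L^2$ being a globally defined nonvanishing parallel section after the modification; this is where one must use that $L$ is Lagrangian so that the $g$-orthogonal splitting $T_xX = \ell\op J\ell$ and hence $\Om_\ell^2$ are well defined pointwise along $L$. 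Everything else is routine Chern–Weil theory.
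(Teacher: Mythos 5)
The paper itself offers no proof of this lemma (it is ``recalled'' as standard), so what matters is whether your relative Chern--Weil argument is sound; it follows the expected route, but there is a genuine gap, and it sits exactly where the content of the lemma lies. In your independence step you write $\nb_1-\nb_0=-2i\,a$ with $a|_L=0$ ``since both make $\Om_L^2$ parallel along $L$'' --- but your hypothesis is only that both connections are flat over $L$, and flatness over $L$ does not force $\Om_L^2$ to be parallel: a parallel section of a flat connection on $K_X^2|_L$ may differ from $\Om_L^2$ by a phase $L\to S^1$ with nontrivial winding. Then $a|_L$ is closed but need not be exact, and the relative classes of the two curvatures differ by $\delta[a|_L]$, where $\delta:H^1(L;\R)\to H^2(X,L;\R)$ is the connecting map; this is generally nonzero. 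Indeed, with only ``flat over $L$'' the statement is false: for $X=\C$ and $L$ the unit circle, the connection that is trivial in the frame $dz^{\ot2}$ of $K_X^2$ is flat on all of $X$ with identically vanishing curvature, yet $2c_1(\C,S^1)$ is the Maslov class, which pairs with the unit disc to $2\ne0$. The hypothesis actually intended (and the only one used later, cf.\ Definition \ref{Mdfn} and Corollary \ref{Masl}, where $A\equiv0$ means precisely $\nb\Om_L^2=0$ along $L$) is that the connection restricts over $L$ to the flat structure in which the canonical section $\Om_L^2$ is parallel; your construction produces exactly such a connection, and your uniqueness argument is correct for that class of connections, so the repair is to state and use this stronger condition rather than bare flatness.

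The same ambiguity undermines your identification step: knowing that the pair $(\text{curvature},0)$ maps to $2c_1(X)$ under $H^2(X,L;\R)\to H^2(X;\R)$ does not determine the relative class, because the kernel of that map is precisely $\delta\bigl(H^1(L;\R)\bigr)$ --- the same ambiguity as above --- so ``by definition of the relative first Chern class'' cannot carry the weight. You must bring the trivialization $\Om_L^2$ into the argument, for instance by pairing with relative $2$-cycles $u:(\Si,\d\Si)\to(X,L)$: trivializing $u^*K_X^2$ over $\Si$ and using that $\Om_L^2$ is $\nb$-parallel along $L$, Stokes' theorem identifies the integral of the normalized curvature over $\Si$ with the winding of $\Om_L^2$ against the disc frame along $\d\Si$, which is how the Maslov class $2c_1(X,L)$ is defined and which also gives the integrality. (Normalization aside --- with the usual conventions $\tfrac{i}{2\pi}[F_\nb]=c_1(K_X^2)=-2c_1(X)$, so your factor $\tfrac{i}{\pi}$ is off by $-2$, though the paper suppresses such factors as well.) With the hypothesis read as above and this pairing computation supplied, your construction and uniqueness argument do give a complete proof.
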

So by Definition \ref{Mdfn} we have: 
\begin{cor}\l{Masl}
Let $(X,\om,J)$ be a K\"ahler manifold,
$g$ a Hermitian metric on $(X,J),$
and $L\sb(X,\om)$ a $g$-Maslov-zero Lagrangian submanifold.
Then $2c_1(X,L)$ may be represented by
the curvature $(1,1)$-form of $K_X^2$ relative to the Chern connection on $(X,J,g).$
\end{cor}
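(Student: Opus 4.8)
The plan is to combine the preceding Lemma with Definition \ref{Mdfn}; the corollary is essentially a reformulation of the two. First I would unwind the $g$-Maslov-zero hypothesis. By Definition \ref{Mdfn} the Maslov form $A$ on $L$ is characterized by $\nb\Om_L^2=2iA\ot\Om_L^2$, where $\nb$ is the Chern connection on $(X,J,g)$, so $A\=0$ means precisely that $\nb\Om_L^2=0$ along $L$. Hence $\Om_L^2$ is a nowhere-vanishing $\nb$-parallel section of the restriction of $K_X^2$ to $L$. A line bundle admitting a parallel trivialization carries, in that trivialization, the trivial connection; so the Chern connection on $K_X^2$ is flat over $L$ in the sense required by the preceding Lemma.

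Next I would apply that Lemma verbatim: since the Chern connection on $K_X^2$ is a connection on $K_X^2$ flat over $L$, its curvature $2$-form represents the (integral) relative de Rham class $2c_1(X,L)\in H^2(X,L;\R)$.

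Finally I would invoke the standard fact that the curvature of the Chern connection of a Hermitian holomorphic line bundle over a complex manifold is of type $(1,1)$. Applying this to $K_X^2$ equipped with the Hermitian metric induced by $g$, the representative produced in the previous step is exactly the curvature $(1,1)$-form of $K_X^2$ relative to the Chern connection on $(X,J,g)$, which is the assertion.

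I do not expect any genuine obstacle: the mathematical content is carried entirely by the preceding Lemma and Definition \ref{Mdfn}. The only point needing any care is the verification that the $g$-Maslov-zero condition supplies exactly the flatness-over-$L$ hypothesis of that Lemma, and this is immediate from the parallel trivialization $\Om_L^2$ over $L$.
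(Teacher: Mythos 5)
Your argument is correct and is essentially the paper's own: the paper deduces the corollary directly from the preceding Lemma together with Definition \ref{Mdfn}, exactly as you do. Your only addition is to spell out explicitly that $A\=0$ makes $\Om_L^2$ a nowhere-vanishing parallel section, hence the Chern connection on $K_X^2$ is flat over $L$ and its $(1,1)$ curvature qualifies as the representative --- a worthwhile clarification, but not a different route.
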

\begin{exam}\l{ExCliff}
Let $X=\CP^n,$ $\om$ the Fubini--Study form and $g$ the Fubini--Study metric.
This is K\"ahler--Einstein and the curvature $(1,1)$-form of $K_X$ relative to $g$ is $-(n+1)\om.$
The Clifford torus $T^n\sb \CP^n$ defined by \eq{CT} is a minimal Lagrangian.
It is $J$-minimal and $g$-Maslov-zero so we can apply to it Corollary \ref{Masl}.

We prove that $\CP^n$ and $T^n$ may both be $\Z_{2n+2}$-graded.
Since $H_1(\CP^n,\Z)=0$ it follows according to Seidel \cite[Lemma 2.6]{Sd1} that $\CP^n$ may be $\Z_{2n+2}$-graded because $2c_1(\CP^n)=-2(n+1)[\om].$
Also $T^n$ may be $\Z_k$-graded if and only if $2c_1(\CP^n,T^n)$ is divisible by $k.$
By Corollary \ref{Masl} we have $c_1(\CP^n,T^n)=-(n+1)[\om]\in H^2(\CP^n,T^n;\R).$
Computation shows that $H_2(\CP^n,T^n;\Z)$ is generated by the image of $H_2(\CP^n,\Z)$
and the $n$ discs $D_a\sb\CP^n,$ $a\in\{1,\cdots,n\},$ defined by $|z_a|\le 1$ and $z_b =1$ for every $b\ne a.$
We have
\[\om|_{D_a}=\frac{ni}{2\pi}\frac{dz_a\w d\bar z_a}{(n+|z_a|^2)^2}
\text{ and }\int_{D_a}\om=1.\]
So $[\om]\in H^2(\CP^n,T^n;\R)$ is integral, q.e.d.
\end{exam}

We turn now to the deformation theory for $g$-Maslov-zero Lagrangians.
We recall Lotay--Pacini's lemma \cite[Lemma 4.1]{LP}
which we use in \S\ref{sec4}:
\begin{lem}\l{LP1}
Let $(X,\om,J)$ be a K\"ahler manifold
and $g$ a Hermitian metric on $(X,J).$
Let $L\sb (X,\om,J)$ be a closed immersed $g$-Maslov-zero Lagrangian
and $NL$ the $g$-normal bundle to $L\sb X.$
For every sufficiently small $v\in C^\iy(NL)$ 
denote by $Av$ the $g$-Maslov $1$-form on the graph of $v$
embedded in $X$ by the $g$-exponential map,
and denote by $A'$ the linearization of $A$ at $0\in C^\iy(NL).$
Then for every $v\in C^\iy(NL)$ we have
\e\l{A'}A'v=dd^*\hat v+\sum_{i=1}^ng(T_\nb(v,e_i),e_i)+v\lrcorner F_\nb\e
where $\hat v:=-Jv\lrcorner g,$ $\{e_1,\cdots,e_n\}$ is a $g$-orthonormal frame on $TL,$
$\nb$ the Chern connection of $(X,J,g),$
$T_\nb$ its torsion tensor and $F_\nb$ its curvature $(1,1)$-form for $K_X.$
\end{lem}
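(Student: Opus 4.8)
The plan is to linearize the relation $\nb\Om_L^2=2iA\ot\Om_L^2$ that defines the Maslov form: equivalently, $2iA$ is the logarithmic covariant derivative, along $L$, of the section $x\mapsto\Om_{T_xL}^2$ of $K_X^2$. It is convenient to compute in a unitary coframe adapted to $L$. Fix $p\in L$, a local $g$-orthonormal frame $e_1,\dots,e_n$ of $TL$ near $p$, and a local unitary $(1,0)$-frame $Z_1,\dots,Z_n$ of $T^{1,0}X$ along $L$ adapted to $L$, i.e.\ whose real and imaginary parts span $TL$ and $NL$ respectively. Writing the Chern connection in this frame as $\nb Z_j=\ga_j{}^k\ot Z_k$ with $(\ga_j{}^k)$ skew-Hermitian, one sees that $\Om_L^2$ agrees along $L$ with the $g$-unit section of $K_X^2$ induced by the dual coframe of $(Z_j)$, and that $A=i\bigl(\textstyle\sum_j\ga_j{}^j\bigr)|_L$ up to sign and the normalizations of Definitions~\ref{dfn gr} and \ref{Mdfn}: the Maslov form is read off from the trace of the Chern connection matrix in any unitary $(1,0)$-frame adapted to $L$. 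In a local trivialization of $K_X^2$ it is $d\th$ plus a connection $1$-form, where $\th$ is the \emph{Lagrangian angle}, i.e.\ half the argument of $\Om_L^2$.

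Now let $v\in C^\iy(NL)$ be small, put $\ph_t(q):=\Exp_q\bigl(tv(q)\bigr)$ for the $g$-exponential map and $L_t:=\ph_t(L)$; for $|t|$ small $L_t$ is totally real, so $\Om_{L_t}^2$ and hence $A_t$ are defined. Pulling the defining relation back by $\ph_t$ and differentiating at $t=0$ gives $A'v=d\dot\th+v\lrcorner F_\nb$, where $\dot\th$ is the first-order variation along $v$ of the Lagrangian angle of $L$, and the term $v\lrcorner F_\nb$ arises from differentiating the $K_X^2$-connection $1$-form (its exterior derivative being the curvature $F_\nb$, up to normalization); the remaining trivialization-dependent pieces cancel. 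Thus everything reduces to computing $\dot\th$.

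The variation $\dot\th$ is controlled by the infinitesimal rotation of the tangent plane $T_qL\to T_{\ph_t(q)}L_t$ under $d\ph_t$, which to first order is the normal component of the covariant derivative $\nb v$, an element of $\Hom(TL,NL)$; composing with $J:NL\to TL$ turns it into an endomorphism of $TL$, and its trace is what feeds into $\dot\th$. For a torsion-free connection this trace is the codifferential $d^*\hat v$ of $\hat v=-Jv\lrcorner g$ — the standard first-variation (McLean) computation, which is precisely where the orthonormality of $\{e_i\}$ is used. The Chern connection is not torsion-free, and $\ph_t$ is built from $g$-geodesics rather than $\nb$-geodesics; tracing the discrepancy contributes exactly $\sum_{i=1}^n g(T_\nb(v,e_i),e_i)$. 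Hence $\dot\th=d^*\hat v+\sum_{i=1}^n g(T_\nb(v,e_i),e_i)$ — the normalization $\nb\Om_L^2=2iA\ot\Om_L^2$, in which $\Om_L^2$ is a square, fixing the overall constants — and substituting into $A'v=d\dot\th+v\lrcorner F_\nb$ yields \eqref{A'}.

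The step I expect to be the main obstacle is the computation of $\dot\th$, the torsion-corrected first variation of the Lagrangian angle. It rests on two substantive inputs: first, the linear-algebra fact that for a path $\ell_t$ of totally-real $n$-planes the argument of $\Om_{\ell_t}^2$ varies to first order by a multiple of the trace of the endomorphism of $\ell_t$ associated with $\dot\ell_t$; and secondly, the identification of the infinitesimal rotation of $TL$ under the $g$-exponential flow with the normal part of $\nb v$, with careful attention to where the Chern torsion $T_\nb$ enters — this is the delicate point, since $\nb$ is not the Levi-Civita connection while $\ph_t$ is defined using $g$-geodesics. Everything else — the cancellation of the trivialization-dependent terms against the $K_X^2$-connection, and pinning down all signs and factors of $2$ — is bookkeeping. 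This is Lotay--Pacini's \cite[Lemma 4.1]{LP}, to which we refer for the details of the calculation.
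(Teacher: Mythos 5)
Your proposal follows essentially the same route as the paper: the paper's proof of Lemma \ref{LP1} contains no computation and simply declares the statement to be a version of Lotay--Pacini's \cite[Lemma 4.1]{LP}, and you too end by referring to that lemma for ``the details of the calculation'', your adapted-unitary-frame/Lagrangian-angle outline being a reasonable sketch of what that computation does. The one substantive difference is that the entire content of the paper's proof is the pair of remarks needed to make that citation legitimate, and these are missing from your write-up: (a) Lotay--Pacini prove their lemma for almost K\"ahler $(X,J,g),$ and one must observe that their argument only uses a connection with $\nb J=\nb g=0,$ hence applies to the Chern connection of a general Hermitian $g$; (b) their formula contains the $J$-volume density $\r_J$ and a projection operator $\pi_L,$ and \eqref{A'} is the simplification valid because for a ($g$-)Lagrangian $L$ one has $\r_J\=1$ and $\pi_L$ is the $g$-orthogonal projection. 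If you defer to \cite{LP} rather than carry out your sketch, you should include these two checks, since the lemma as stated there neither assumes the present hypotheses nor literally reads as \eqref{A'}. A small bookkeeping point: your derivation lands on $A'v=d\bigl(d^*\hat v+\sum_{i}g(T_\nb(v,e_i),e_i)\bigr)+v\lrcorner F_\nb,$ with the torsion scalar inside the exterior derivative, whereas the display \eqref{A'} puts it outside, where it is a $0$-form added to $1$-forms; the paper's own later rewriting \eqref{A3} groups the zeroth-order correction inside the $d$ as you do, so your placement is the type-consistent one, but you should flag this rather than assert that the substitution yields \eqref{A'} on the nose.
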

\begin{proof}
This is a version of Lotay--Pacini's lemma \cite[Lemma 4.1]{LP}.
They suppose that $(X,J,g)$ is almost K\"ahler
but again this is not essential; the result applies to every almost Hermitian manifold $(X,J,g)$
with connection $\nb$ such that $\nb J=\nb g=0.$
Our formula \eq{A'} is simpler than Lotay--Pacini's in two respects.
One is that their $J$-volume function $\r_J:L\to\R^+$
is identically equal to $1$
and the other is that their projection operator $\pi_L$ is $g$-orthogonal.
These both hold because $L$ is $g$-Lagrangian,
which completes the proof.
\end{proof}

We remark that we can compute the torsion term in the conformally K\"ahler case.
If $g$ is a conformally K\"ahler metric on $(X,J)$ associated with $\ps^2\om$
then $\nb-\d \log\ps^2\ot\id$ is the Levi-Civita connection for $\om.$ 
Hence by computation we see that for any vector fields $u,v$ on $X$ we have
\[T_\nb(u,v)=d\log\ps\llcorner(u\ot v+Ju\ot Jv-v\ot u-Jv\ot Ju).\]
So the torsion term on \eq{A'} is equal to $(n-1)d\log\ps$ and
\e\l{A3}A'v=d(\ps^{n-1}d^*\ps^{1-n}\hat v)+v\lrcorner F_\nb.\e
This has application to deformation theory;
that is, by \eq{A3} we can apply Lotay--Pacini's result \cite[Proposition 4.5]{LP}
with $\ps^{n-1}$ in place of $\r_J.$
So Lotay--Pacini's uniqueness and persistence theorem \cite[Theorem 5.2]{LP} extends to
$g$-Maslov-zero Lagrangians with $g$ conformally K\"ahler.
But the corresponding perturbations are just Lagrangian and not necessarily Hamiltonian as we want
in the Floer theory context.

On the other hand, we can make (domain) Hamiltonian perturbations of special Lagrangians
and $J$-minimal Lagangians.
The result for special Lagrangians goes back to McLean's theorem \cite{ML}
and that for $J$-minimal Lagrangians is proved by Lotay--Pacini \cite[Theorem 5.6]{LP}.
\begin{thm}\l{HamPert1}
{\bf(i)}
Let $(X,\om,J)$ be a K\"ahler manifold, $\Ps$ a $J$-holomorphic volume form on $X$
and $L\sb X$ a closed immersed special Lagrangian relative to $(J,\Ps).$ 
Then for every sufficiently small perturbation $(J',\Ps')$ of $(J,\Ps)$ as $\om$-compatible complex structures and holomorphic volume forms relative to them,
there exists a domain Hamiltonian perturbation of $L$ which is special with respect to $(J',\Ps').$
\\{\bf(ii)}
Let $(X,\om,J)$ be a K\"ahler manifold such that $-\om$ is the Ricci $(1,1)$-form of another K\"ahler metric $g$ on $(X,J),$
and  $L\sb X$ a closed immersed $J$-minimal Lagrangian. 
Then for every sufficiently small perturbation $(J',g')$ of $(J,g)$ as $\om$-compatible complex structures and K\"ahler metrics relative to them
whose Ricci $(1,1)$-forms are all equal to $-\om,$
there exists a domain Hamiltonian perturbation of $L$ which is $J$-minimal with respect to $(J',g').$
\end{thm}

\section{$HF^*$ Nonvanishing Theorems}\l{sec3}
Following Fukaya, Oh, Ohta and Ono \cite{FOOO} given a unital commutative ring $R$ we define the Novikov ring
\e\l{La}\La=\left\{\sum_{i=0}^\iy a_iT^{\la_i}:a_i\in R[e,e^{-1}],\la_i\in\R\text{ for each }i\text{ and }\lim_{i\to\iy}\la_i=+\iy\right\}\e
where $T,e$ are two formal variables. They are related respectively to the areas and Maslov numbers of pseudoholomorphic curves.
We denote by $\La^0\sb\La$ the subring defined by the same formula \eq{La} as $\La$
but with $\la_i\ge0$ in place of $\la_i\in\R.$
We denote by $\La^+\sb\La^0$ the ideal defined by \eq{La} with $\la_i>0$ in place of $\la_i\in\R.$
We write $\px$ for $=$ modulo $\La^+.$

Fukaya categories are defined over $\La.$
If the ambient symplectic manifold is spherically nonnegative we can take $R=\Z$ as Fukaya, Oh, Ohta and Ono \cite{FOOO-Z} do.
But in general we have to do the pseudoholomorphic-curve counts over $\Q$ and we shall therefore suppose that $R$ contains $\Q.$
Otherwise, in our applications we need no condition on $R;$ that is, we work under the following:
\begin{hp}\l{0}
Let $(X,\om)$ be a $\Z_k$-graded symplectic manifold of real dimension $2n,$
either compact or $\om$-convex at infinity.
Let $R$ be a unital commutative ring such that if $X$ is not spherically nonnegative then $\Q\sb R.$
If $2\ne0$ in $R,$ we shall suppose that $X$ is given a background bundle for relative spin structures on Lagrangians in $X,$
and that the Lagrangians concerned are all given relative spin structures.
\end{hp}
Under this hypothesis there is a Fukaya category $\cF(X)$ and its cohomology category $H\cF(X).$
In this paper, following Akaho and Joyce \cite{AJ} we suppose that every object of $\cF(X)$ consists of
a closed generically-immersed Lagrangian $L\sb X$ and its additional structures such as bounding cochains.
More precisely, we should write the immersion map to $L,$ say $\Lh\to L,$ and `closed' means $\Lh$ being a  closed manifold.
We can include also a suitable class of local systems on $L$ and the results in this paper will hold for them but we shall omit this in our formal treatment.
We recall now the notion of bounding cochains now.

We recall first the homologically perturbed version of Floer complexes.
Given a closed manifold $\Lh$ and a generic Lagrangian immersion $\Lh\to L\sb X$ we write $L\cap L:=\Lh\t_X\Lh.$
This is the disjoint union of the diagonal $\Lh$ and self-intersection pairs.
It depends upon the choice of $\Lh\to X$ but is unique up to automorphisms of $\Lh.$
We write
$C_L^*:=H^*(L\cap L,R)[-\mu_{L,L}]$
where the diagonal has no degree shift and every self-intersection pair has degree equal to its Maslov index.
If $L$ is embedded, this $C_L^*$ is $\Z$-graded and supported in degrees $0,\cdots,n.$
If $L$ is $\Z_k$-graded, so is $C_L^*.$
If also $L$ has on its every self-intersection pair two gradings nearly equal,
then $C_L^*$ is supported in degrees $0,\cdots,n$ modulo $k\Z.$

We use the following version of $A_\iy$ algebra structure constructions.
There are on $C_L^0$ a unit and the intersection pairing so we can speak of unitality and cyclic symmetry as
Fukaya \cite{Fuk1} does.
\begin{thm}\l{A}
There is on $C_L^*\ot\La^0$ a cohomologically-unital cyclic
curved $A_\iy$ algebra structure $(\fm^i)_{i=0}^\iy$ with $\fm^1\px0$ and $\fm^2\px\pm\w$ where $\w$ is the cup-product map.
If also $L$ is embedded, we can make this strictly unital. 
\end{thm}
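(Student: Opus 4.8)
The plan is to obtain the operations $\fm^k$ as virtual counts of stable pseudoholomorphic discs bounding the chosen generic Lagrangian immersion $\Lh\to L\sb X$, with $k$ cyclically ordered boundary input marked points and one output marked point, and then to transport the resulting curved $A_\iy$ structure from a chain-level model down to the finite model $C_L^*\ot\La^0$ by the homological perturbation lemma. Concretely, following Fukaya--Oh--Ohta--Ono \cite{FOOO} and, for the immersed case, Akaho--Joyce \cite{AJ}, one equips the moduli spaces $\oM_{k+1}(\be)$ of such discs in each class $\be\in H_2(X,L)$---whose boundary marked points carry the fibre-product data of $L\cap L=\Lh\t_X\Lh$---with a coherent system of Kuranishi structures compatible with the codimension-one boundary decomposition of $\d\oM_{k+1}(\be)$ into fibre products $\oM_{k_1+1}(\be_1)\t_{L\cap L}\oM_{k_2+1}(\be_2)$ over the matching intersection data, together with a choice of virtual fundamental chains, and defines
\[\fm^k(x_1,\dots,x_k)=\sum_{\be\in H_2(X,L)}T^{\om(\be)}\,e^{\mu(\be)/2}\,(\ev_0)_*\bigl((\ev_1,\dots,\ev_k)^*(x_1\ot\cdots\ot x_k)\bigr).\]
The curved $A_\iy$ relations for $(\fm^i)_{i=0}^\iy$ are precisely the boundary identities for these chains, while the passage from the chain-level model to the canonical model on $C_L^*=H^*(L\cap L,R)[-\mu_{L,L}]$ is the standard homological-perturbation step of \cite[Ch.~5]{FOOO}; the degree bookkeeping uses the Maslov grading of \S\ref{sec2}, which is what makes $C_L^*$ a $\Z_k$-graded complex.

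For the leading-order assertions one uses that every nonconstant holomorphic disc has strictly positive symplectic area, so its contribution to any $\fm^k$ lies in $C_L^*\ot\La^+$; hence modulo $\La^+$ only the class $\be=0$ terms survive (and $\fm^0\in C_L^*\ot\La^+$ entirely, since no stable $\be=0$ disc with a single marked point exists). Because we have passed to the canonical model on cohomology, the $\be=0$ part of $\fm^1$ is the classical differential on $H^*(L\cap L,R)$, which vanishes; thus $\fm^1\px0$. Likewise the $\be=0$ part of $\fm^2$ is the classical product on $H^*(L\cap L,R)$ induced by the correspondence $\Lh\t_X\Lh\t_X\Lh\to(\Lh\t_X\Lh)^3$, which is exactly the cup-product map $\w$ of the statement; thus $\fm^2\px\pm\w$ with the usual Koszul sign.

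Cyclic symmetry and cohomological unitality are then supplied by the refinements of Fukaya \cite{Fuk1}, as announced just before the statement. For cyclicity one takes the perturbation data on each $\oM_{k+1}(\be)$ to be invariant under cyclic relabelling of the $k+1$ boundary marked points; the resulting operations then obey $\langle\fm^k(x_1,\dots,x_k),x_0\rangle=\pm\langle\fm^k(x_0,\dots,x_{k-1}),x_k\rangle$ for the Poincar\'e pairing on $H^*(L\cap L,R)$, which---after the Maslov shift---has degree $-n$ and is the cyclic pairing on $C_L^*$ referred to above. For unitality the candidate unit is the fundamental class of the diagonal component $\Lh\sb L\cap L$: the constant-disc term makes it a two-sided $\fm^2$-unit modulo $\La^+$, and the obstruction-theoretic argument of \cite{FOOO}, made compatible with the cyclic choices as in \cite{Fuk1}, promotes it to a homotopy unit of the curved $A_\iy$ algebra over $\La^0$. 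When $L$ is embedded, $L\cap L=L$, and one can in addition choose the perturbation data compatible with the forgetful map dropping the marked point that carries the unit; by the strict-unit construction of \cite{FOOO} this upgrades the fundamental class $1\in C_L^0$ to a genuine strict unit.

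The step I expect to be the main obstacle is the construction of a single coherent system of Kuranishi structures and perturbations that is \emph{simultaneously} compatible with (i) all boundary gluing identities, (ii) the cyclic symmetry of the boundary marked points, and---in the embedded case---(iii) the forgetful maps used for strict units. Requirement (ii) is the delicate one, since the cyclic symmetry has fixed loci (discs with extra automorphisms) where naive equivariant perturbations need not exist; this is exactly the difficulty resolved by Fukaya's construction of cyclically symmetric Kuranishi structures and perturbations in \cite{Fuk1}, which we invoke rather than reprove.
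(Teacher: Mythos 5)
Your proposal follows essentially the same route as the paper, which offers no independent proof of Theorem \ref{A} but justifies it by citing Fukaya \cite{Fuk1} for the embedded (cyclic, strictly unital) case and Fukaya--Oh--Ohta--Ono \cite{FOOO} together with Akaho--Joyce \cite{AJ} for the curved $A_\iy$ structure on the canonical model, with the immersed case handled by retreating from strict to cohomological unitality. One caution: your claim that in the immersed case the unit can still be promoted to a homotopy unit compatible with the cyclic perturbation data goes beyond what the paper asserts---its remark explains that the unit-upgrading constructions require perturbing the moduli of constant discs at self-intersection points, which would destroy the cyclic symmetry, and this is precisely why only a cohomological unit is claimed there; since the statement needs only cohomological unitality, this overreach does not affect the proof, but you should not rely on that stronger claim.
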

\begin{rem}
The embedded case is proved by Fukaya \cite{Fuk1}.
This will extend to the immersed case if we give up having the strict unit and satisfy ourselves with
a cohomological unit.
To have the strict unit we ought to perturb in a certain manner the relevant pseudoholomorphic-curve moduli spaces.
But in the immersed case there would be moduli spaces of constant maps to
self-intersection points. If we perturbed these, the cyclic symmetry would fail.

The cyclic symmetry is used in Lemma \ref{v} below.
But also we give it another proof in which we use the open-closed map.
\end{rem}

By a {\it bounding cochain} on $(C_L^*\ot\La^0;\fm^0,\fm^1,\fm^2,\cdots)$
we mean an element $\b\in C^1_L\ot\La^+$ with $\sum_{i=0}^\iy\fm^i(\b,\cdots,\b)=0.$
Given such a $\b$ there is a natural way of giving $C_L^*\ot\La^0$
a cohomologically-unital cyclic {\it ordinary} $A_\iy$ algebra structure
$(\fm^i_\b )_{i=0}^\iy$ with $\fm_\b^1\px0$ and $\fm_\b^2\px\pm\w.$

We denote by $1\in C_L^0$ the unit, which is the cohomological unit in Theorem \ref{A}.
We denote by $*1\in C_L^n$ the volume form supported on the diagonal,
which is if $2\ne0$ in $R$ the Poincar\'e dual to a point.
We denote also symbolically by
$(x,y):=\pm\int_{L\cap L}x\w y$
the intersection pairing on $C_L^*.$ The sign does not matter to us. 
We prove now:
\begin{lem}\l{v}
For every bounding cochain $\b\in C_L^1\ot\La^+$ we have:
\iz
\item[\bf(i)] if $L$ is embedded, we have $*1\notin\im\fm^1_\b;$ and
\item[\bf(ii)] if $\fm^1_\b *1=0$ then $[*1]\ne0$ in $H^n(C_L^*\ot\La^0,\fm_\b^1).$ 
\iz
\end{lem}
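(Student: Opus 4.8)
The plan is to deduce both statements from one fact — that the unit $1\in C_L^0$ and the point class $*1\in C_L^n$ are dual generators under the intersection pairing — together with the (anti)self-adjointness of $\fm^1_\b$ for that pairing, which is the $\fm^1$-instance of cyclic symmetry. So I would first record that $(*1,1)=\pm1\ne0$: the diagonal is one connected component of $L\cap L$, and restricted to it $*1$ is the generator of $H^n(L,R)$ (the volume form, Poincar\'e dual to a point) while $1$ is the generator of $H^0(L,R)$, so this component contributes $\pm1$ to $(*1,1)$; on every other component — a self-intersection pair — $*1$ vanishes, being by definition supported on the diagonal. Thus $(*1,1)$ is, up to sign, the Poincar\'e pairing on $L$ of a point with the fundamental class, a unit of $R\sb\La^0$.

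Next I would assemble the structural input, all of which is already at hand. Since $\b$ is a bounding cochain, $(\fm^1_\b)^2=0$, so $\fm^1_\b$ is a genuine differential and $H^*(C_L^*\ot\La^0,\fm^1_\b)$ is defined. Since $(\fm^i_\b)_{i\ge0}$ is a cyclic $A_\iy$ algebra — cyclic symmetry being inherited from $(\fm^i)$ under the twist by the odd-degree element $\b$ — applying cyclic symmetry to $\fm^1_\b$ gives $(\fm^1_\b x,y)=\pm(x,\fm^1_\b y)$ for all $x,y$. And since $1$ is the cohomological unit of $(\fm^i_\b)$ — the strict unit when $L$ is embedded, by Theorem \ref{A} — it is in particular a cocycle: $\fm^1_\b(1)=0$.

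For part (i): if $L$ is embedded and $*1=\fm^1_\b(c)$ for some $c\in C_L^*\ot\La^0$, then pairing with the cocycle $1$ and using self-adjointness gives $\pm1=(*1,1)=(\fm^1_\b c,1)=\pm(c,\fm^1_\b 1)=0$, a contradiction; hence $*1\notin\im\fm^1_\b$. For part (ii): assume $\fm^1_\b(*1)=0$, so that $[*1]\in H^n(C_L^*\ot\La^0,\fm^1_\b)$ is defined, and suppose toward a contradiction that $[*1]=0$, say $*1=\fm^1_\b(c)$ with $c\in C_L^*\ot\La^0$; the identical computation gives $\pm1=(*1,1)=\pm(c,\fm^1_\b 1)=0$, which is absurd, so $[*1]\ne0$.

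I do not anticipate a genuine obstacle: once the cyclic, (cohomologically) unital structure of $(\fm^i_\b)$ is in place, the lemma is formal. The point to pin down carefully is the normalization $(*1,1)=\pm1$ — that $*1$ and $1$ really are dual under the intersection pairing on the diagonal component of $L\cap L$, with correct signs and orientations — and, if one wishes to be thorough, the claim that cyclic symmetry of $(\fm^i)$ genuinely transfers to the bounding-cochain twist $(\fm^i_\b)$ with the signs as stated; both belong to the package set up before the lemma.
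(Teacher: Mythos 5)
Your proof is correct and follows essentially the paper's own argument: both rest on the (cohomological) unit being an $\fm^1_\b$-cocycle, the cyclic symmetry giving self-adjointness of $\fm^1_\b$ for the intersection pairing, and the nonvanishing pairing $(1,*1)=\pm1$ coming from the diagonal component. The only differences are cosmetic: for (ii) the paper evaluates the pairing by passing through $\fm^2_\b(1,*1)=*1+\fm^1_\b y$ (using the cohomological-unit property on the cocycle $*1$, since the unit is not strict in the immersed case) rather than directly from the definition of $(\cdot,\cdot)$ as you do, and it also records a second, independent proof of (ii) via the open-closed map, which your argument does not need.
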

\begin{proof}
Suppose contrary to (i) that $*1=\fm^1_\b x$ for some $x\in C^*_L\ot\La.$
Then using the strict unit we find
\e\l{v0}
0=(\fm^1_\b 1,x)=\pm(1,\fm^1_\b x)=\pm(1,*1)\px\pm\int_{L\cap L}\fm^2_\b(1,*1)\px\pm\int_{L\cap L}*1=1.
\e
This is impossible which proves (i).

We give (ii) two proofs.
Firstly, if $*1=\fm^1_\b x$ for some $x\in C^*_L\ot\La$ then as in \eq{v0} we have
\e\l{v1}
0\px\pm\int_{L\cap L}\fm_\b^2(1,*1)
\e
but $\fm_\b^2(1,*1)$ need not be $*1.$
However $1$ is a cohomological unit and $*1$ an $\fm_\b^1$-cocycle, so
\e\l{v2}
\fm_\b^2(1,*1)=*1+\fm_\b^1y\px*1\px *1.
\e
Now by \eq{v1} and \eq{v2} we have
$0\px\pm\int_{L\cap L}*1\px\pm1\in R\-\{0\}.$
This is impossible and so completes the first proof.

In the other proof of (ii) we use neither the cyclic symmetry nor the unitality but the open-closed map constructed by Fukaya, Oh, Ohta and Ono \cite[\S3.8]{FOOO}.
This extends to generically immersed Lagrangians.
We use in particular the map
$\fp_\b:H^n(C_L^*\ot\La^0,\fm_\b^1)\to H^{2n}(X,R)\ot\La^0.$
Denote by $*_X1$ the volume form on $X$
and to distinguish it from that on $L$ write $*_L1:=*1\in C^n_L.$
Then by the extended version of Fukaya, Oh, Ohta and Ono's lemma \cite[Lemma 6.4.2]{FOOO}
we have $\fp_\b[*_L1]\px[*_X1].$
So $[*_L1]\ne0.$
\end{proof}

We prove now our version of Fukaya, Oh, Ohta and Ono's theorem \cite[Theorem E]{FOOO}.  
We recall therefore that the $A_\iy$ structure of Theorem \ref{A} is gapped;
that is, if we take on $X$ an $\om$-compatible  almost-complex structure $J$
we can write $\fm_\b^1$ as the sum of $\fm_{\b,\be}^1$ of degree $1-[\be]\cdot2c_1(X,L)$
where $\be\in H_2(X,L;\Z)$ is a genus-zero stable $J$-holomorphic curve class.
Also, given a bounding cochain $\b\in C_L^1\ot \La^+$ we define
$HF^*(\b,\b):=H^*(C_L^*\ot \La,\fm_\b^1)\ot\La.$

\begin{thm}\l{ne0}
Let Hypothesis \ref{0} hold and let $L\sb X$ be a closed generically-immersed Lagrangian.
Suppose also that:
\iz
\item[\bf(i)] $L$ is embedded and there is on $X$ an $\om$-compatible almost-complex structure $J$
such that for every $J$-holomorphic curve class $\be\in H_2(X,L;\Z)$
we have $\be\cdot2c_1(X,L)\le0;$ or
\item[\bf(ii)] $k\ge n+2$ and $L$ has on its every self-intersection pair the two gradings nearly equal.
\iz
Then for $i=0,n$ and for every object $\b\in H\cF(X)$ supported on $L$
we have $HF^i(\b,\b)\ne0.$
\end{thm}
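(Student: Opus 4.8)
The plan is to exhibit two explicit nonzero classes: the cohomological unit $1\in C_L^0$ for $HF^0(\b,\b)$ and the class $*1\in C_L^n$ for $HF^n(\b,\b)$; the hypotheses are exactly what is needed to see that these are $\fm^1_\b$-closed and not $\fm^1_\b$-exact. The mechanism is the gapped decomposition $\fm^1_\b=\sum_\be\fm^1_{\b,\be}$ with $\fm^1_{\b,\be}$ of degree $1-[\be]\cdot2c_1(X,L)$ recorded just above. I build the $A_\iy$ structure of Theorem \ref{A} from the almost-complex structure $J$---the given one in case (i), an arbitrary one in case (ii)---so that only genus-zero stable $J$-holomorphic classes $\be\in H_2(X,L;\Z)$ contribute to this sum. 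In case (i) each contributing $\be$ satisfies $[\be]\cdot2c_1(X,L)\le0$, and since $L$ is embedded $C_L^*=H^*(L;R)$ has no self-intersection generators, hence is $\Z$-graded and supported in degrees $0,\cdots,n$. In case (ii) the $\Z_k$-grading of $L$ forces $2c_1(X,L)$ to be divisible by $k$, so each $\fm^1_{\b,\be}$ raises the $\Z_k$-degree by exactly $1$; and the nearly-equal-gradings hypothesis confines $C_L^*$ to the residues $0,\cdots,n$ modulo $k\Z$, which, since $k\ge n+2$, contain neither $-1$ nor $n+1$.

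For $HF^n(\b,\b)$ I first check that $*1$ is $\fm^1_\b$-closed and then invoke Lemma \ref{v}(ii). Indeed $\fm^1_{\b,\be}(*1)$ lands in the summand $C_L^{n+1-[\be]\cdot2c_1(X,L)}$ of $C_L^*\ot\La$, which vanishes: in case (i) because $n+1-[\be]\cdot2c_1(X,L)\ge n+1>n$, and in case (ii) because $n+1$ is not among the degrees supporting $C_L^*$. So $\fm^1_\b(*1)=0$ and Lemma \ref{v}(ii) yields $[*1]\ne0$. For $HF^0(\b,\b)$ the class $1$ is $\fm^1_\b$-closed, being the cohomological unit of the deformed structure $(\fm^i_\b)_{i=0}^\iy$, and it is not a coboundary: $\fm^1_{\b,\be}$ maps $C_L^j$ into $C_L^{j+1-[\be]\cdot2c_1(X,L)}$, so a nontrivial contribution to $C_L^0$ would force $j=[\be]\cdot2c_1(X,L)-1$, which is $\le-1$ in case (i) and $\equiv-1\pmod k$ in case (ii); in either case $C_L^j=0$, so $\fm^1_\b$ has trivial image in the summand $C_L^0\ot\La$ and hence $1$ is not an $\fm^1_\b$-coboundary. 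Therefore $[1]\ne0$ in $HF^0(\b,\b)$ and $[*1]\ne0$ in $HF^n(\b,\b)$, as claimed.

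These steps are all routine; the only points that need care, neither of them deep, occur in case (i). First, since $c_1(X,L)$ need not vanish there, $\fm^1_\b$ does not preserve the grading of $C_L^*$ on its own, so to make the two degree counts fully rigorous one carries along the Novikov variable $e$ of degree $2$ and checks that the $C_L$-degree bookkeeping above is unaffected by the accompanying $e$- and $T$-powers. Second, Lemma \ref{v}(ii) is stated over $\La^0$, and one should note that it upgrades to nonvanishing of $HF^n(\b,\b)$ over $\La$; this is precisely why the proof of Lemma \ref{v}(ii) through the open-closed map $\fp_\b$ is the pertinent one, since it exhibits $\fp_\b[*1]$ as a unit of the Novikov ring and so shows $[*1]$ is not Novikov-torsion.
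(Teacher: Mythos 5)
Your argument is correct and is essentially the paper's own proof: the same degree count via the gapped decomposition $\fm^1_\b=\sum_\be\fm^1_{\b,\be}$ of degree $1-[\be]\cdot2c_1(X,L)$ shows that nothing maps to $1$ and that $*1$ is an $\fm^1_\b$-cocycle, and Lemma \ref{v} then gives $[*1]\ne0$. The only (harmless) cosmetic difference is that you invoke Lemma \ref{v}(ii) in both cases, whereas the paper uses the strict-unit statement \ref{v}(i) in the embedded case; your remarks on the Novikov bookkeeping and on passing from $\La^0$ to $\La$ are consistent with how Lemma \ref{v} is proved there.
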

\begin{proof}
The case (i) is due to Fukaya, Oh, Ohta and Ono \cite[Theorem E]{FOOO} but we recall the proof.
Since $L$ is embedded it follows that $C_L^*$ is $\Z$-graded
and supported in degrees $0,\cdots,n.$
Also, since $\be\cdot2c_1(X,L)\le0$ it follows that $\fm_\b^1$ has degree $\ge1.$
So under $\fm_\b^1$ nothing goes to $1$ and $*1$ goes to zero.
Since the cohomological unit is by definition an $\fm_\b^1$-cycle it follows now that $[1]\ne0$ in $H^0(C_L^*\ot\La^0,\fm_\b^1)$ and accordingly in $HF^0(\b,\b).$
On the other hand, we have proved that $*1$ is also an $\fm_\b^1$-cycle.
So by Lemma \ref{v} (i) we have $[*1]\ne0$ in 
$H^n(C_L^*\ot\La^0,\fm_\b^1)$ and accordingly in $HF^n(\b,\b).$

In the case (ii) we have $C_L^*$ merely $\Z_k$-graded.
But $k\ge n+2$ so $C_L^{-1}=C_L^{n+1}=0.$
Now $d_\b$ has degree $1$ modulo $k\Z$ so again
under $\fm_\b^1$ nothing goes to $1$
and $*1$ goes to zero.
So $[1]\ne0$ in $HF^0(\b,\b)$ and now by Lemma \ref{v} (ii) we have
$[*1]\ne0$ in $HF^n(\b,\b).$
This completes the proof.
\end{proof}

We apply Theorem \ref{ne0} to $g$-Maslov-zero Lagrangians.
We say that a closed immersed Lagrangian  $L\sb X$ {\it nearly} underlies $\cF(X)$ if there is an arbitrarily-small Hamiltonian deformation of $L$ which
underlies an object of $H\cF(X).$
Such an object is said to be {\it supported near $L$.} 
\begin{cor}\l{7.1}
Let Hypothesis \ref{0} hold with $X$ given an $\om$-compatible almost-complex structure $J.$
Let $g$ be another Hermitian metric on $(X,J)$ and
$L\sb (X,\om,J,g)$ a close $\Z_k$-graded immersed $g$-Maslov-zero Lagrangian.
Suppose that either:
{\bf(i)} $L$ is embedded; or
{\bf(ii)} $k\ge n+2$ and $L$ has on its every self-intersection pair the two gradings nearly equal.
Then for $i=0,n$ and for every object $\b\in H\cF(X)$ supported near $L$
we have $HF^i(\b,\b)\ne0.$ 
\end{cor}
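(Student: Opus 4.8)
The plan is to deduce the corollary from Theorem \ref{ne0}, applied not to $L$ itself but to a generic Lagrangian close to it. By the definition of ``supported near $L$'' there is an arbitrarily small Hamiltonian perturbation $L'$ of $L$ which is generically immersed and underlies the object $\b\in H\cF(X)$, so that $HF^i(\b,\b)$ is the Floer cohomology of $L'$. It therefore suffices to check that $L'$ satisfies hypothesis \textbf{(i)}, respectively \textbf{(ii)}, of Theorem \ref{ne0}; since the integer $k$ and the relative class $2c_1(X,\cdot)$ are invariant under Hamiltonian isotopy, the task in each case is to carry the relevant geometric input from $L$ over to $L'$.

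In case \textbf{(ii)} I would show that the near-equality of the two gradings on every self-intersection pair is stable under a small enough perturbation. Transport the grading of $L$ to $L'$ along the perturbation. Because $\Lh$ is compact and the immersion $\Lh\to X$ is an embedding off its (closed) self-intersection locus, for a sufficiently small perturbation every self-intersection pair of $L'$ lies near the diagonal or near a self-intersection pair of $L$ --- that is, near a transverse double point or a tangency of $L$ --- where the two grading values of $L$ are nearly equal by hypothesis and vary continuously, so the two grading values of $L'$ at the nearby pair are again nearly equal. Hence $L'$ still has $k\ge n+2$ and the grading condition, and Theorem \ref{ne0}\textbf{(ii)} gives the conclusion. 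The compactness argument excluding far-away new self-crossings is the one point here that needs attention.

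Case \textbf{(i)} is where the $g$-Maslov-zero hypothesis is genuinely used. A small perturbation keeps $L'$ embedded, so the content is to produce an $\om$-compatible almost-complex structure with $\be\cdot2c_1(X,L')\le0$ for every holomorphic curve class $\be$; I would use the given $J$. Since $L$ is $g$-Maslov-zero, Corollary \ref{Masl} represents $2c_1(X,L)$ --- hence also $2c_1(X,L')$, the relative class being isotopy-invariant --- by the curvature $(1,1)$-form of $K_X^2$ relative to the Chern connection of $(X,J,g)$, which is flat over $L$. In the situations the corollary addresses this curvature is either zero or a constant multiple $c$ of $\om$ with $c\le0$, as in the computation for $\CP^n$ in Example \ref{ExCliff}; since $\om$ restricts to zero on any Lagrangian and has non-negative integral over a $J$-holomorphic curve, $\be\cdot2c_1(X,L')$ then equals $0$, respectively $c$ times the $\om$-area of a $J$-holomorphic representative of $\be$, and so is $\le0$ in either case. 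Theorem \ref{ne0}\textbf{(i)} then finishes the proof. Pinning down this sign --- that $g$-Maslov-zero together with the ambient curvature forces the relative Chern number non-positive on holomorphic curves --- is the main obstacle, the rest being bookkeeping on top of Theorems \ref{ne0} and \ref{Masl}.
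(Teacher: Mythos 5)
Your proposal is correct and follows essentially the same route as the paper: the paper's proof likewise applies Theorem \ref{ne0} to a small Hamiltonian perturbation of $L$ underlying $\b$, using that condition (i) transfers because it only involves the (isotopy-invariant) class $2c_1(X,L)$ represented via Corollary \ref{Masl}, and that condition (ii) is stable under small perturbations. The one point where you go beyond the paper's (very terse) argument is in case (i), where you note explicitly that Corollary \ref{Masl} alone does not give the sign $\be\cdot2c_1(X,L)\le0$ and that one must use that the Chern curvature of $K_X^2$ is zero or a nonpositive multiple of $\om$ --- exactly the ambient input ($c_1(X)\le0$, the Calabi--Yau or K\"ahler--Einstein setting) that the paper supplies only where the corollary is applied; this is a worthwhile clarification rather than a deviation.
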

\begin{proof}
In the case (i) it follows from Corollary \ref{Masl} that the condition (i) of Theorem \ref{ne0} holds.
This concerns only integral homology and cohomology classes and so is preserved under perturbations of $L.$
Taking one of them which underlies $\b$ we see from Theorem \ref{ne0} that $HF^i(\b,\b)\ne0$ as we want. 
Also in the case (ii) the condition (ii) of Theorem \ref{ne0} is preserved under perturbations of $L$
so the conclusion follows in the same way.
\end{proof}

For special Lagrangians we can say more:
\begin{cor}\l{7}
Let Hypothesis \ref{0} hold with $X$ given an $\om$-compatible almost-complex structure $J$ and a $J$-holomorphic volume form.
Then we have:
\iz
\item[\bf(i)]
For $i=0,n$ and for every object $\b\in H\cF(X)$ supported near a closed immersed special Lagrangian,
we have $HF^i(\b,\b)\ne0.$
\item[\bf(ii)]
Let $L_1,L_2\sb X$ be two closed immersed special Lagrangians
and let $H\cF(X)$ have two isomorphic objects $\b_1,\b_2$ supported respectively near $L_1,L_2.$
Then $L_1,L_2$ have up to shift the same grading.
\iz
\end{cor}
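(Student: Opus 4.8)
The plan is to obtain (i) as an immediate instance of Corollary \ref{7.1}(ii), and then to prove (ii) by contradiction, feeding (i) into a Maslov-index count governed by Lemma \ref{6.9}.

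For (i): equip $(X,J)$ with the conformally K\"ahler metric $g$ associated with $\ps^2\om$, $\ps$ as in \eqref{ps}. By the discussion between \eqref{ps} and Theorem \ref{HL}, a closed immersed Lagrangian in $X$ is $g$-Maslov-zero precisely when it is special; and a special Lagrangian is $\Z_\iy$-graded by a constant, so $k=\iy\ge n+2$ and on each self-intersection pair its two branches carry equal, hence nearly equal, gradings. Thus hypothesis (ii) of Corollary \ref{7.1} holds for every closed immersed special Lagrangian $L$, and its conclusion $HF^i(\b,\b)\ne0$ for $i=0,n$ and every $\b$ supported near $L$ is exactly (i).

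For (ii): suppose for contradiction that $L_1,L_2$ do not have, up to shift, the same grading. Let $\ph_1,\ph_2\in\R$ be their constant gradings, so $c:=(\ph_2-\ph_1)/2\pi\notin\Z$. Realize $\b_1,\b_2$ on generically-immersed Hamiltonian perturbations $L_1',L_2'$ of $L_1,L_2$; after a further small Hamiltonian perturbation, which leaves $HF^*(\b_1,\b_2)$ unchanged, assume $L_1',L_2'$ mutually transverse. Since the gradings of $L_1',L_2'$ are $C^0$-close to the constants $\ph_1,\ph_2$, formula \eqref{mu2} with angles $\th_1,\dots,\th_n\in(0,\pi)$ puts every $\mu_{L_1',L_2'}(x)$, $x\in L_1'\cap L_2'$, into the interval $(c-\de,c+n+\de)$ with $\de$ as small as we wish; as $c\notin\Z$ this is precisely the argument of Lemma \ref{6.9}, so for small enough perturbations all these integer Maslov indices lie in a window $[i,i+n-1]$ with $i:=\lceil c\rceil$. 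Hence the Floer complex computing $HF^*(\b_1,\b_2)$---whose underlying graded module is unaffected by adjoining the degree-$1$ bounding cochains---is supported in degrees $i,\dots,i+n-1$, i.e.\ $HF^j(\b_1,\b_2)=0$ for $j\notin[i,i+n-1]$.

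On the other hand, part (i) applied to $\b_1$ gives $HF^0(\b_1,\b_1)\ne0\ne HF^n(\b_1,\b_1)$, and an isomorphism $\b_1\cong\b_2$ in $H\cF(X)$ is a degree-$0$ invertible morphism; post-composition with it yields a degree-$0$ isomorphism $HF^*(\b_1,\b_1)\cong HF^*(\b_1,\b_2)$. Therefore $HF^0(\b_1,\b_2)$ and $HF^n(\b_1,\b_2)$ are both nonzero, forcing $0,n\in[i,i+n-1]$, hence $i\le0$ and $i\ge1$ at once---a contradiction, so $L_1,L_2$ have the same grading up to shift. I expect the only delicate points to be bookkeeping rather than substance: making sure the transverse representatives of $\b_1,\b_2$ stay close enough to $L_1,L_2$ that Lemma \ref{6.9}'s stated stability under Lagrangian perturbations delivers the \emph{same} window $[i,i+n-1]$, and checking that passing from the chain complex of a transverse pair to the bounding-cochain-twisted complex does not shift the $\Z$-grading. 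Both are routine, since Hamiltonian perturbations can be taken arbitrarily small and bounding cochains have degree $1$.
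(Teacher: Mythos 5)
Your proposal is correct and follows essentially the same route as the paper: part (i) is deduced from Corollary \ref{7.1} (case (ii), with the conformally K\"ahler metric making special Lagrangians $g$-Maslov-zero and $k=\iy$), and part (ii) is the paper's contradiction argument via Lemma \ref{6.9}, which confines $HF^*(\b_1,\b_2)$ to a window of $n$ consecutive degrees that cannot contain both $0$ and $n$, contradicting (i) through the isomorphism $\b_1\cong\b_2$. You merely spell out the perturbation and grading bookkeeping that the paper leaves implicit.
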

\begin{proof}
The part (i) follows immediately from Corollary \ref{7.1} with $k=\iy.$ 
If (ii) fails then by Lemma \ref{6.9} we have $HF^*(\b_1,\b_2)$ supported in degrees $i,\cdots,i+n-1$ for some $i\in\Z.$
So either $HF^0(\b_1,\b_2)=0$ or $HF^n(\b_1,\b_2)=0.$
But $\b_1\cong\b_2$ so either $HF^0(\b_1,\b_1)=0$ or $HF^n(\b_1,\b_1)=0.$
This however contradicts (i).
\end{proof}

\section{Hamiltonian Perturbation Theorem}\l{sec4}
In this section we prove the following theorem.
We work in the real analytic category
and for our applications in \S\ref{sec5} we can take the underlying real analytic structure of a complex manifold $(X,J).$
In fact any other real analytic structure will do if it makes everything analytic
but the statement will then be too long.
\begin{thm}\l{Ham}
{\bf(i)}
Let $(X,\om,J)$ be a real analytic K\"ahler manifold of complex dimension $n$
and $g$ a real analytic Hermitian metric on $(X,J).$
Let $L_1,L_2\sb X$ be two distinct closed irreducibly-immersed $g$-Maslov-zero Lagrangians
and let $S$ be as in Lemma \ref{P0}.
Then there exists a neighbourhood $U$ of $S\sb X$
and arbitrarily-small Hamiltonian deformations $L_1',L_2'$ of $L_1,L_2$ respectively
which intersect generically each other with no intersection point in $U$ of index $0$ or $n.$
This will still hold if $L_1,L_2$ are $g$-Maslov-zero only near $L_1\cap L_2.$
If also Hypothesis \ref{0} holds and $L_1,L_2$ nearly underlie $H\cF(X)$ we can make $L_1',L_2'$ underlie $H\cF(X).$
\\{\bf(ii)}
The assertion will hold even if $\om,g$ are not analytic but if there are
$\om$-compatible complex structures $J_t,$
$J_t$-Hermitian metrics $g_t$ and $g_t$-Maslov-zero Hamiltonian perturbations $L_{1t},L_{2t}$ of $L_1,L_2$
all parametrized smoothly by $t\in\R$ with $(J_0,g_0,L_{10},L_{20})=(J,g,L_1,L_2)$
and such that for $t\ne0$ the $\om,g_t$ are $J_t$-analytic.
\end{thm}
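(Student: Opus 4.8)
The plan is to combine three ingredients already assembled in the paper: the vanishing result of Lemma \ref{P0}, the deformation-theory stability of $g$-Maslov-zero Lagrangians under perturbation of $(J,g)$ contained in Theorem \ref{HamPert1} and the discussion around \eqref{A3}, and the Floer-theoretic fact that a small Hamiltonian perturbation of a Lagrangian that nearly underlies $H\cF(X)$ can be chosen to underlie $H\cF(X)$. I would first treat part (i), the analytic case, and then deduce part (ii) by a limiting argument along the family $(J_t,g_t,L_{1t},L_{2t})$.

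For part (i): the intuition behind the statement is that, away from the set $S$ where $L_1,L_2$ share a tangent plane, transverse intersection points of index $0$ or $n$ cannot exist because the Maslov angles $\th_1,\dots,\th_n\in(0,\pi)$ together with the grading discrepancy $\ta$ cannot sum to $0$ or $n\pi$ unless some $\th_j\to0$ or $\th_j\to\pi$; Lemma \ref{P0} already isolates this phenomenon for Hamiltonian perturbations of $L_2$ alone, so the real work is to arrange that a \emph{generic} pair of small Hamiltonian perturbations $L_1',L_2'$ exists at all, and, when Hypothesis \ref{0} holds, that these can be taken to underlie $H\cF(X)$. Here I would invoke the method of Imagi--Joyce--Oliveira dos Santos \cite[Theorem 4.7]{IJO}, as the introduction promises: using real analyticity of $(X,\om,J,g)$ one perturbs $L_1,L_2$ within the class of $g$-Maslov-zero Lagrangians --- or, via \eqref{A3}, within the conformally-K\"ahler deformation framework of Lotay--Pacini --- so that the self-intersections and mutual intersections become transverse double points, while keeping the perturbation Hamiltonian. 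The analyticity is what licenses a unique-continuation / finiteness argument controlling $L_1\cap L_2$ near $S$, so that $S$ has a neighbourhood $U$ on which no index $0$ or $n$ point survives; applying Lemma \ref{P0} (with $L_1$ in the role of its $L_2$, and then symmetrically) gives the required $\ep$ and hence the conclusion. The clause ``$g$-Maslov-zero only near $L_1\cap L_2$'' is handled by noting that the whole argument is local around $L_1\cap L_2$. For the last sentence, once $L_1',L_2'$ are generically immersed and $C^0$-close to $L_1,L_2$, the hypothesis that $L_1,L_2$ nearly underlie $H\cF(X)$ means a further arbitrarily-small Hamiltonian push makes each underlie an object; since the ``no index $0$ or $n$ point in $U$'' condition is open in the $C^1$-topology on the perturbations, this extra push preserves it.

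For part (ii): given the smooth family with $(J_0,g_0,L_{10},L_{20})=(J,g,L_1,L_2)$ and $\om,g_t$ being $J_t$-analytic for $t\ne0$, I would apply part (i) to each $t\ne0$ to obtain a neighbourhood $U_t$ of the corresponding $S_t$ and Hamiltonian perturbations $L_{1t}',L_{2t}'$ with the stated properties, then let $t\to0$. The point is that all the conditions involved --- generic transversality, the $\ep$ from Lemma \ref{P0}, and (when relevant) underlying $H\cF(X)$ --- are open conditions that are controlled uniformly as $t\to0$ because $S_t$ varies continuously and the relevant angle estimates are continuous in $t$; so for $t$ small but nonzero the perturbations $L_{1t}',L_{2t}'$ are already arbitrarily-small Hamiltonian deformations of $L_1,L_2$ with the desired property with respect to $J,g$ themselves. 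One must check that a neighbourhood $U$ of $S$ (for $t=0$) can be extracted; this follows because $S\subseteq \liminf S_t$ up to the continuity of the tangent-plane condition, so a fixed $U$ works for all small $t$.

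The main obstacle I expect is the transversalization step in part (i): making the intersections generic by a Hamiltonian --- rather than merely Lagrangian --- perturbation while staying inside the $g$-Maslov-zero (or special, or $J$-minimal) class. The deformation theory of \eqref{A3} produces Lagrangian perturbations, and the Floer-theory application needs Hamiltonian ones; reconciling this is precisely why the paper must route through Theorem \ref{HamPert1} (perturbing $(J,g)$ instead) and the \cite{IJO} argument, and why real analyticity is unavoidable --- it is what forces $L_1\cap L_2$ to be well-behaved near $S$ and lets the finiteness/unique-continuation input go through. A secondary subtlety is bookkeeping the index ``modulo $k\Z$'' when $k<\iy$: one needs $k\ge n$ (as in Lemma \ref{P0}) so that ``index $0$ or $n$ mod $k$'' genuinely forces the angle sum to be $0$ or $n\pi$ on the nose, and this hypothesis is inherited here from the ambient $\Z_k$-grading.
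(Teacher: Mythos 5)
You have inverted the division of labour between Lemma \ref{P0} and Theorem \ref{Ham}, and this leaves the actual content of the theorem unproved. Lemma \ref{P0} only excludes index-$0$/$n$ intersection points \emph{away} from $S$ (outside $U$): its proof is a compactness argument whose contradiction is that a limit point would lie in $S$, so it is silent exactly on the region near $S$, where the intersection angles degenerate. Theorem \ref{Ham} is precisely the statement about the region \emph{inside} $U$, and only later, in Theorem \ref{P1}, are the two combined (``Lemma \ref{P0} outside $U$, Theorem \ref{Ham} inside $U$''). Consequently your plan --- arrange generic Hamiltonian perturbations and then ``apply Lemma \ref{P0} (with $L_1$ in the role of its $L_2$, and then symmetrically)'' --- cannot produce the conclusion: no application of Lemma \ref{P0} controls intersection points in $U$. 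The mechanism the paper uses near $S$, which is absent from your proposal, is quantitative rather than a transversality/unique-continuation gesture: write $L_1$ near $S$ as the graph of a closed $1$-form $u$ over $L_2$ with $u=\nb u=0$ on $S$; the $g$-Maslov-zero equation gives $|dd^*u|\lesssim |u|_{C^1}$ (Lemma \ref{Hlem}, Corollary \ref{Hcor}); real analyticity gives the \L{}ojasiewicz estimates $|u|\ll|\nb u|$ and $|d^*u|\le|dd^*u|^{1+\ep}\ll|\nb u|$ (Lemma \ref{Loj}); one then takes $L_2'$ to be the graph of $df$ for a Morse function with $df\ne0$ on $S$, so that \eqref{f} holds --- note that $L_2'$ is \emph{not} kept Maslov-zero, so the difficulty you identify as central (perturbing within the Maslov-zero class by Hamiltonian moves) is not what the proof needs --- and at any intersection point near $S$ the resulting estimate \eqref{Hs}, $|d^*(u-df)|\le n^{-1/2}|\nb(u-df)|$, forces the nondegenerate Hessian $\nb(u-df)$ to have eigenvalues of both signs, i.e.\ index neither $0$ nor $n$. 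Nothing in your write-up produces this trace-versus-norm inequality or any substitute for it, so the core of part (i) is missing.

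For part (ii) your limit argument is in the right spirit but rests on an unjustified uniformity claim: the \L{}ojasiewicz constants and the neighbourhoods furnished by part (i) at parameter $t$ need not be uniform as $t\to0$, and ``the $\ep$ from Lemma \ref{P0} controlled uniformly in $t$'' is again not the relevant issue inside $U$. The paper instead fixes once and for all the Hamiltonian perturbations $L_1',L_2'$ (chosen as in Step 2 to underlie $H\cF(X)$, with $dh_2\ne0$ on $S$ and $|h_1|_{C^2}\ll|h_2|_{C^2}$, which is also how the last clause of (i) is handled --- by building the $H\cF(X)$ condition into the choice of $h_1,h_2$ rather than by an extra ``small push'' afterwards), and then runs the Step-1 estimates at a single small $t\ne0$ relative to the moving analytic references $L_{1t},L_{2t}$, using only that $S_t\to S$ to get a $t$-independent neighbourhood $U$ of $S$. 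Finally, your ``secondary subtlety'' about indices modulo $k$ does not arise in Theorem \ref{Ham}: the theorem carries no grading, the index in $U$ being defined as the Morse index of the graphing function (see the remark following the statement), and the $k\ge n$ bookkeeping enters only through Lemma \ref{P0} when the two results are combined in Theorem \ref{P1}.
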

\begin{rem}
{\bf(i)}
The theorem is of local nature so in fact we need the $g$-Maslov-zero condition and the real analyticity condition only near $L_1\cap L_2.$
\\{\bf(ii)}
The index in $U$ may be defined without grading $L_1',L_2';$ that is, near $S$ we write $L_2'$
over $L_1'$ as the graph of an exact $1$-form $df$ and we take the Morse indices of $f.$
This definition agrees with that in \S\ref{sec2}
if $L_1,L_2$ are graded with the same grading on $L_1\cap L_2$ and if $L_1',L_2'$ are graded by the obvious homotopies.
%
\end{rem}


There are a few key estimates which we use for the proof of Theorem \ref{Ham}.
Firstly, according to \L{}ojasiewicz \cite{Lj}
for every function $f:\R^n\to\R$ with $f(0)=0$ and {\it analytic} near $0\in\R^n$
there exist two constants $c>0$ and $p\in(0,1)$ such that near $0\in\R^n$
we have $|f|^p\le c|df|.$
This implies readily that we have:
\begin{lem}\l{Loj}
In the circumstances of Theorem \ref{Ham} {\rm(i)}
take a Weinstein neighbourhood of $L_2\sb(X,\om)$ and write $L_1$ near $S$ as the graph over $L_2$ of a closed $1$-form $u.$
Then near $S$ we have
$|u|\ll |\nb u|\ll1;$
that is, for every $\ep>0$ there exists a neighbourhood of $S\sb L_2$ on which $|u|\le \ep|\nb u|\le\ep^2.$
Here $|\bl|,\nb$ are both computed with respect to the induced metric on $L_2.$
\end{lem}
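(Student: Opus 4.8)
The statement I want is Lemma~\ref{Loj}: writing $L_1$ near $S$ as the graph over $L_2$ of a closed $1$-form $u$ in a Weinstein neighbourhood, the estimate $|u|\ll|\nb u|\ll 1$ holds near $S$. The key input is that $u$, being a closed $1$-form on a real analytic Lagrangian, is \emph{exact} in a neighbourhood of each point of $S$, and that the primitive is real analytic because $L_1,L_2$ and the symplectic form (hence the Weinstein identification with $T^*L_2$) are all real analytic. So the plan is: first fix $s\in S$; in local analytic coordinates on $L_2$ centred at $s$, write $u=df$ with $f$ analytic and $f(s)=0$ (subtracting a constant). Since $s\in L_1\cap L_2$ and $L_1,L_2$ share a tangent plane at $s$, the zero-section of $T^*L_2$ passes through the graph of $u$ at $s$ with matching tangent, i.e.\ $f(s)=0$ and $df(s)=0$. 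Then invoke \L{}ojasiewicz's gradient inequality in the form quoted just before the lemma: there are $c>0$, $p\in(0,1)$ with $|f|^p\le c|df|$ near $s$.

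**Getting the two inequalities.** From $|f|^p\le c|df|=c|u|$ I want to pass to $|u|\le\ep|\nb u|$. The idea is: on a small ball $B_r(s)$, Taylor expansion (or the mean value inequality applied along radial segments from $s$, using $df(s)=0$) gives $|u(x)|=|df(x)|\le C\,r\,\sup_{B_r}|\nb u|$ and also $|f(x)|\le C r\sup_{B_r}|u|\le C r^{1+?}$, so $|u|=|df|\le C\,|f|^{1/p'}\cdot(\dots)$ — wait, more carefully: from $|f|^p\le c|df|$ we get $|f|\le (c|df|)^{1/p}=(c|u|)^{1/p}$, and since $1/p>1$ and $|u|$ is small near $s$, this says $|f|$ is \emph{much} smaller than $|u|$. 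Combined with $|u(x)|=|df(x)|\le C\sup_{B_{|x|}}|{\rm Hess}\,f|\cdot|x|$ and a similar bound relating $|\nb u|={\rm Hess}\,f$ on $B_r$ to $\sup_{B_{2r}}|u|/r$ via interior estimates for the (analytic, hence smooth) function — this is the standard step: for any fixed point $x$ near $s$, apply the interpolation/interior-gradient estimate on the ball of radius $\tfrac12|x-s|$ about $x$ to get $|\nb u(x)|\le C|x-s|^{-1}\sup|u|$. Chaining these with the \L{}ojasiewicz bound $|f|\le(c|u|)^{1/p}$ and $|u|\le C|x-s|\sup|\nb u|$ produces, after shrinking the neighbourhood, $|u|\le\ep|\nb u|$ and then $|\nb u|\le\ep$. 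Finally, cover $S$ by finitely many such balls (possible since $S$ is compact, being closed in the compact $L_1\cap L_2$) and take the minimum neighbourhood; on a connected piece of $L_2$ meeting $S$ one uses that $u$ is globally closed and locally exact to patch, but in fact the estimate is pointwise so no patching of primitives is needed — only finiteness of the cover.

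**Main obstacle.** The delicate point is the passage from the \L{}ojasiewicz inequality $|f|^p\le c|df|$, which controls $|f|$ in terms of $|\nb u|$, to a \emph{reverse}-type control $|u|\ll|\nb u|$: one must quantify that near a point where $df$ and $f$ both vanish, $|df|$ is dominated by a root of $|f|$ of exponent $>1$, hence by a quantity that (after the interior estimate relating Hessian to sup of $|u|$) is itself $o(|\nb u|)$. This requires being careful about which directions and which balls the interior elliptic/interpolation estimates are applied on, and about uniformity of the constants $c,p$ over the finite cover of $S$ — the \L{}ojasiewicz exponent and constant depend on the point, but compactness of $S$ lets us take a uniform pair. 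The analyticity of $g$ enters only through analyticity of the metric used to measure $|u|,|\nb u|$, which does not affect the argument since any two smooth metrics are comparable; analyticity of $f$ (equivalently of the Weinstein coordinates and of $L_1$) is what is genuinely used, and this is exactly the hypothesis of Theorem~\ref{Ham}(i). I expect the bookkeeping of exponents in the chain of inequalities, rather than any conceptual difficulty, to be where the real work lies.
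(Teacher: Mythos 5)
There is a genuine gap at exactly the point you flag as ``the delicate point'', and it is not closed. You apply the \L{}ojasiewicz gradient inequality to the local primitive $f$ of $u$, which gives $|f|^p\le c|df|=c|u|$, i.e.\ a comparison of $f$ with $u$ --- one derivative too low for the target $|u|\ll|\nb u|$. The tools you then invoke to climb a derivative do not produce the conclusion: the mean-value bound $|u(x)|\le C|x-s|\sup_B|\nb u|$ and the claimed ``interior'' estimate $|\nb u(x)|\le C|x-s|^{-1}\sup_B|u|$ both bound things from \emph{above}, whereas the lemma is a pointwise \emph{lower} bound on $|\nb u(x)|$ in terms of $|u(x)|$. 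No combination of sup-over-ball upper bounds can give that: at a degenerate tangency the derivative of $u$ at the particular point $x$ may be far smaller than its supremum on the surrounding ball, which is precisely the situation the lemma must handle. (In addition, the quoted interior estimate is not a consequence of analyticity alone with uniform constants, and $|f|\ll|u|$ is true but irrelevant to the pair $(|u|,|\nb u|)$.) So the sentence ``chaining these \dots\ produces $|u|\le\ep|\nb u|$'' is an assertion, not a proof.

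The intended one-line derivation applies the gradient inequality one level up, namely to analytic functions that vanish on $S$ and whose differentials are comparable to $\nb u$: either the components $u_i$ of $u$ in local analytic coordinates (equivalently the first partials $\d_if$ of your primitive), or the single analytic function $h=|u|_g^2.$ For the first: near $s\in S$ one has $|u_i|^p\le c|du_i|\le cC(|\nb u|+|u|),$ hence $|u_i|\le\bigl(cC(|\nb u|+|u|)\bigr)^{1/p}\le\ep'(|\nb u|+|u|)$ once $|u|,|\nb u|$ are small (they vanish on $S$, since $u=\nb u=0$ there); summing over $i$ and absorbing the $|u|$-term gives $|u|\le\ep|\nb u|,$ and $|\nb u|\le\ep$ is just continuity. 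For the second: $|u|^{2p}\le c\,|dh|\le 2c|u||\nb u|,$ so at points with $u\ne0$ one gets $|u|^{2p-1}\le2c|\nb u|$ and therefore $|u|=|u|^{2p-1}|u|^{2-2p}\le 2c|\nb u|\,|u|^{2-2p}\le\ep|\nb u|$ near $S.$ Your remarks on uniformity are fine and carry over verbatim: cover the compact $S$ by finitely many such neighbourhoods, take the maximum of the constants and of the exponents (allowed since the relevant quantities are $\le1$), and note that no patching of primitives is needed because the estimate is pointwise. With this substitution your write-up becomes a correct proof of Lemma \ref{Loj}.
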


We show next that the Maslov $1$-form operator may be well approximated by $dd^*$:
\begin{lem}\l{Hlem}
Let $(X,\om,J)$ be a K\"ahler manifold and let $g$ be a Hermitian metric on $(X,J).$
Let $L\sb X$ be a closed immersed $g$-Maslov-zero Lagrangian
and let $NL$ be the $g$-normal bundle to $L\sb X.$
Then there exists a constant $c>0$ such that for every sufficiently $C^1$-small $v\in C^\iy(NL)$
whose graph $($embedded in $X$ by the $g$-exponential map$)$ is $g$-Maslov-zero, we have at every point of $L$
\e\l{H0}|dd^*\hat v|\le c|v|_{C^1}(1+|v|_{C^2})\e
where $\hat v$ is the same as in Lemma \ref{LP1},
$|v|_{C^1}:=|v|+|\nb v|$
and $|v|_{C^2}:=|v|+|\nb v|+|\nb^2v|;$
the latter two are computed pointwise on $L$ with respect to $g.$
\end{lem}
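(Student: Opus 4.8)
The plan is to reduce the estimate \eqref{H0} to the linearization formula \eqref{A'} of Lemma \ref{LP1}, using the fact that both the actual Maslov form $Av$ on the graph of $v$ and its linearization $A'v$ are controllable in terms of $C^2$-norms of $v$. First I would invoke Lemma \ref{LP1} to write
\[
A'v = dd^*\hat v + \sum_{i=1}^n g(T_\nb(v,e_i),e_i) + v\lrcorner F_\nb,
\]
and hence solve for the term we want:
\[
dd^*\hat v = A'v - \sum_{i=1}^n g(T_\nb(v,e_i),e_i) - v\lrcorner F_\nb.
\]
The last two terms are pointwise bounded by $c|v|$ since $T_\nb$ and $F_\nb$ are fixed smooth tensors on the compact $L$ (more precisely on a fixed compact neighbourhood of $L$ in $X$ through which all small graphs pass). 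So everything comes down to estimating $A'v$ pointwise by $c|v|_{C^1}(1+|v|_{C^2})$, given that the graph of $v$ is itself $g$-Maslov-zero, i.e. $Av\equiv 0$.

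The key step is therefore to control the difference $A'v - Av = A'v$ (using $Av=0$) by the nonlinear remainder of the map $v\mapsto Av$. Here I would Taylor-expand $Av$ about $v=0$: by construction $A0=0$ (the linearization is taken at the Maslov-zero Lagrangian $L$), so $Av = A'v + Q(v)$ where $Q$ is the higher-order remainder. Thus $A'v = -Q(v)$ whenever $Av=0$. The remainder $Q(v)$ at a point of $L$ is a universal smooth expression in the $1$-jet of $v$ together with the coefficients of $\Om^2$, $g$ and $\nb$ along the graph, which in turn depend on the $2$-jet of $v$ through the second fundamental form of the graph; inspecting the construction of $\Om_L^2$ and $\nb\Om_L^2$ one sees each term of $Q(v)$ carries at least one factor of $v$ or $\nb v$ beyond what appears linearly, and at most one factor of $\nb^2 v$ (since $A$ is a first-order operator applied to quantities depending on at most the first derivatives of the graph map, whose derivatives bring in at most $\nb^2 v$). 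This yields the bound $|Q(v)|\le c|v|_{C^1}(1+|v|_{C^2})$ for $v$ sufficiently $C^1$-small, and combining with the previous display gives \eqref{H0}.

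The main obstacle I expect is making the remainder estimate $|Q(v)|\le c|v|_{C^1}(1+|v|_{C^2})$ genuinely precise: one must verify that the quadratic-and-higher part really does factor through $|v|_{C^1}$ rather than merely $|v|_{C^2}$, i.e. that the worst term is schematically $\nb v\cdot\nb^2 v$ or $v\cdot\nb^2 v$ and never $(\nb^2 v)^2$ or a bare $\nb^2 v$. This is exactly where the hypothesis that the graph is itself $g$-Maslov-zero is used: it removes the purely linear occurrence of $\nb^2 v$ (the $dd^*\hat v$ term we are estimating), so that in the relation $A'v=-Q(v)$ the surviving $\nb^2 v$-dependence of $Q$ always comes multiplied by a factor controlled by $|v|_{C^1}$. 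I would make this rigorous by writing the graph in a fixed Weinstein-type chart, expanding $\Om^2_{\gr v}$, $g|_{\gr v}$ and the Chern connection coefficients to first order in the $2$-jet of $v$ with uniformly-bounded smooth remainders (valid on the compact region where all small graphs live), and reading off the claimed structure; the compactness of $L$ supplies the uniform constant $c$ and the uniform threshold for $C^1$-smallness.
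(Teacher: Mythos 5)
Your proposal is correct and follows essentially the same route as the paper: both use Lemma \ref{LP1} to identify $dd^*\hat v$ with $A'v$ up to terms bounded by $c|v|_{C^1}$, then use $Av=0$ together with the structural fact that the Maslov form of the graph has the form $A_1(v,\nb v)+A_2(v,\nb v)\cdot\nb^2 v$ (affine in $\nb^2 v$ with coefficients smooth in the $1$-jet, since the canonical section of $K_X^2$ over the graph depends only on $(v,\nb v)$), so the nonlinear remainder is bounded by $c|v|_{C^1}|v|_{C^2}$. Your Taylor-remainder formulation $A'v=-Q(v)$ is exactly the paper's estimate of $|(A-A')v|$, so no further comment is needed.
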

\begin{proof}
Let $v\in C^\iy(NL)$ be sufficiently $C^1$-small
and $Av\in C^\iy(T^*L)$ the Maslov $1$-form of the graph of $v.$
The tangent space to the graph of $v$ involves the first derivative $\nb v$
so the canonical section of $K_X^2$ over the graph of $v$ is of the form $B(v,\nb v)$
where $B:NL\op(T^*L\ot NL)\to NL$ is a smooth function. 
Then $Av$ involves the differentiation of $B(v,\nb v)$ so 
\e\l{H0.5}Av=A_1(v,\nb v)+A_2(v,\nb v)\cdot\nb^2v\e
where $A_1:NL\op(T^*L\ot NL)\to NL$ and $A_2:NL\op(T^*L\ot NL)\to NL\ot TL\ot TL$ are some smooth functions.
Denote by $A'$ the linearization of $A$ at $0\in C^\iy(NL).$
Then by \eq{H0.5} there is a $v$-independent constant $c_1>0$ such that
\e\l{H1}|(A-A')v|\le c_1[|v|^2+|\nb v|^2+|\nb^2 v|(|v|+|\nb v|)]\le c_1|v|_{C^1}|v|_{C^2}\e
where the same constant $c_1$ will do for both the inequalities.
Now by Lemma \ref{LP1} $A'$ is, up to lower-order terms, equal to $dd^*\hat v;$
that is, there is a $v$-independent constant $c_2>0$ such that
\[|A'v-dd^* \hat v|\le  c_2|v|_{C^1}.\]
So by \eq{H1} we have $|(A-A')v|\le \max\{c_1,c_2\}|v|_{C^1}(1+|v|_{C^2}).$
This implies indeed that if $Av=0$ then \eq{H0} holds.
\end{proof}
We give a corollary to the lemma above:
\begin{cor}\l{Hcor}
In the circumstances of Lemma \ref{Hlem}
take a Weinstein neighbourhood of $L\sb X.$
Then there exists a constant $c>0$ such that for every sufficiently $C^1$-small $1$-form $u\in C^\iy(T^*L)$ whose graph is also minimal, we have at every point of $L$
\e\l{H5}|dd^* u|\le c|u|_{C^1}(1+|u|_{C^2}).\e
\end{cor}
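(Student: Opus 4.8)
The plan is to transfer the estimate of Lemma \ref{Hlem}, which is stated for graphs of normal vector fields under the $g$-exponential map, to the setting of a Weinstein neighbourhood where $L$ near $L\cap L$ is written as the graph of a closed $1$-form. The two parametrizations of a tubular neighbourhood of $L$ in $X$ — the $g$-exponential map on $NL$ and the Weinstein embedding of $T^*L$ — differ by a diffeomorphism that is the identity along the zero section. First I would fix once and for all such a diffeomorphism $\Phi$ between a neighbourhood of the zero section in $T^*L$ and a neighbourhood of the zero section in $NL$, covering the identity on $L$, and record that $\Phi$ and $D\Phi$ are bounded with bounded derivatives on the relevant compact piece near $L\cap L$ (this uses that $L_1,L_2$ are closed, so everything happens over a compact set).

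Next I would translate: given a $1$-form $u$ on $L$ whose graph $\Gamma_u\subseteq T^*L$ has minimal — equivalently $g$-Maslov-zero, by Lotay--Pacini's Theorem 2.4 recalled after Definition \ref{Mdfn} — image in $X$, the corresponding normal section is $v:=\Phi\circ u$ (viewing $u$ as a section of $T^*L$ and $\Phi$ fibrewise), and $\Gamma_u$ and the $g$-exponential graph of $v$ have the same image in $X$, hence the latter is $g$-Maslov-zero. Because $\Phi$ depends smoothly on the base point and is fibrewise smooth with $\Phi(0)=0$, there is a constant $c_0>0$ with $|v|_{C^1}\le c_0|u|_{C^1}$ and $|v|_{C^2}\le c_0|u|_{C^1}(1+|u|_{C^2})$ pointwise on $L$; the $C^2$ estimate picks up a product because the second derivative of a composition involves $(D\Phi)\cdot\nabla^2 u$ together with $(D^2\Phi)\cdot(\nabla u)^{\otimes 2}$. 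Conversely, comparing the two flat connections, one has $|dd^\ast u - dd^\ast\hat v|\le c_1|u|_{C^1}$ for a $v$-independent (equivalently $u$-independent) constant, since the difference of the exponential-graph and Weinstein-graph descriptions contributes only first-order corrections to the linearized Maslov operator — this is of the same flavour as the lower-order terms already absorbed in the proof of Lemma \ref{Hlem}.

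Combining these: apply Lemma \ref{Hlem} to $v$ to get $|dd^\ast\hat v|\le c|v|_{C^1}(1+|v|_{C^2})$, bound the right-hand side by $c\,c_0|u|_{C^1}\bigl(1+c_0|u|_{C^1}(1+|u|_{C^2})\bigr)$, and note that for $u$ sufficiently $C^1$-small the factor $c_0|u|_{C^1}$ is $\le 1$, so the whole right-hand side is $\le c'|u|_{C^1}(1+|u|_{C^2})$ for a new constant $c'$. Adding the comparison estimate $|dd^\ast u|\le |dd^\ast\hat v|+c_1|u|_{C^1}$ yields \eqref{H5}. The main obstacle is the bookkeeping in the $C^2$ comparison: one must check that passing through the change of parametrization $\Phi$ does not generate a term genuinely quadratic in $\nabla^2 u$ (which would break the form of \eqref{H5}), and this is exactly where one uses that $\Phi$ is linear-to-leading-order along the fibres — more precisely that $\nabla^2 v$ is affine in $\nabla^2 u$ with coefficients controlled by $|u|_{C^1}$ — together with the minimality (equivalently $g$-Maslov-zero) hypothesis on the graph, which is what lets us invoke Lemma \ref{Hlem} in the first place rather than estimating a generic graph.
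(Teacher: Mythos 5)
Your proposal follows essentially the same route as the paper: pass through the transition diffeomorphism between the Weinstein chart on $T^*L$ and the $g$-exponential chart on $NL$ (identity along the zero section, fibrewise a diffeomorphism, hence a Hadamard factorization of the type $\Phi(u)=F_1(u)\cdot u$), transfer the $C^1$/$C^2$ norms and the operator $dd^*$ across it, and then invoke Lemma \ref{Hlem} for the normal graph. Two of your intermediate estimates, however, are stated more strongly than what is true, even though the argument survives once they are weakened. First, $|v|_{C^2}\le c_0|u|_{C^1}(1+|u|_{C^2})$ fails in general: in $\nb^2 v$ the coefficient of $\nb^2u$ is essentially the fibre derivative of $\Phi$ at the zero section, which is invertible and of order $1$, not $O(|u|_{C^1})$; the correct statement (and the one the paper records) is $|v|_{C^2}\le c|u|_{C^2}$ for $C^1$-small $u$, which still gives $|v|_{C^1}(1+|v|_{C^2})\le c'|u|_{C^1}(1+|u|_{C^2})$ and hence suffices. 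Second, the comparison $|dd^*u-dd^*\hat v|\le c_1|u|_{C^1}$ is also too strong: writing $\hat v=G_1(u)\cdot u$ with $G_1$ smooth, the expansion of $dd^*(G_1(u)u)$ produces terms of size $|u|\,|\nb^2u|$, so the error is of the form $c|u|_{C^1}(1+|u|_{C^2})$ rather than $c|u|_{C^1}$; moreover the principal parts of $dd^*\hat v$ and $dd^*u$ agree only up to the zeroth-order invertible coefficient $G_1(0)$, so the comparison should be phrased, as in the paper's \eqref{H7}, with a constant factor in front of the principal term. With these corrections the final combination is unchanged and \eqref{H5} follows, so the proposal is sound in outline and coincides with the paper's proof, but the two displayed intermediate bounds should be replaced by their weaker (true) versions.
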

\begin{proof}
The two embeddings, one given by the exponential map and the other given by the Weinstein neighbourhood, are mutually related by a diffeomorphism $F$ between neighbourhoods of the zero sections of $T^*L,NL$ respectively which induces the identity on the zero section $L,$ and over every point of $L,$ a diffeomorphism of the two fibres.
So there is a smooth function $F_1:T^*L\to NL\ot TL$
such that for every sufficiently $C^1$-small $u\in C^\iy(T^*L)$ we have
$Fu=F_1u\cdot u.$
In particular there is a $u$-independent constant $c>0$ such that
\e\l{H6} |Fu|_{C^1}\le |u|_{C^1}\text{ and }|Fu|_{C^2}\le c|u|_{C^2}.\e
The function $Gu:=-JFu\lrcorner g$ is also of the same form; that is,
there is a smooth function $G_1:T^*L\to T^*L\ot TL$ such that
$Gu=G_1u\cdot u.$
This implies in turn that we have, making $c$ large enough,
\e\l{H7}
|dd^*Gu|\le c|dd^*u|+c|u|_{C^1}(1+|u|_{C^2})
\e
Now by \eq{H0} we have, making $c$ large enough,
\[ |dd^* Gu|\le c|Fu|_{C^1}(1+|Fu|_{C^2}).\]
So by \eq{H7} we have \eq{H5} with $c$ large enough.
\end{proof}

Now we prove Theorem \ref{Ham} in three steps:
\begin{proof}[Step 1: proof of the first part of {\rm(i)}]
In this case we take $L_1'=L_1$ and perturb only $L_2.$
We take a Weinstein neighbourhood of $L_2\sb (X,\om)$
and define $L_2'$ as the graph of a certain exact $1$-form $df$ on $L_2$ with $f:L_2\to\R$ taken as follows.
Since $L_1,L_2$ are both connected and analytic with $L_1\ne L_2$ as subsets of $X$ it follows that $L_2\-L_1$ is dense in $L_2.$
So there is a Morse function $f:L_2\to\R$ with $df\ne0$ on $L_1\cap L_2.$
Since $S\sb L_1\cap L_2$ it follows then that
there is a constant $c>0$ such that we have near $S$
\e\l{f}|f|_{C^2}<c|df|.\e
This condition is preserved under rescalings and smooth perturbations of $f$
so we can make $L_2'$ arbitrarily $C^\iy$ close to $L_2$ and transverse to $L_1.$

Near $S$ take a local component of $L_1$ if need be (in the strictly immersed case) and write it as the graph over $L_2$ of some closed $1$-form $u.$
This is possible by the definition of $S$ and in fact we have also $u=\nb u=0$ on $S.$
So by Lemma \ref{Loj} we have near $S$
\e\l{ph1}|u|\ll|\nb u|\ll1.\e
On the other hand, by Corollary \ref{Hcor} with $L_2$ in place of $L,$ there is a constant $c'>0,$ which depends upon $|u|_{C^2},$
such that we have near $S$
\e\l{ph2} |dd^*u|\le c' (|u|+|\nb u|)\le 2c'|\nb u|\ll1\e
where the second inequality follows from \eq{ph1}.
Applying to $d^*u$ the \L{}ojasiewicz estimate, we find a constant $\ep>0$ such that we have near $S$
\e\l{ph3} |d^*u|\le |dd^*u|^{1+\ep}\le (2c'|\nb u|)^{1+\ep}\ll|\nb u|\e
where the second inequality follows from \eq{ph2}.

We show now that we have on $L_1\cap L_2'$ near $S$
\e
\l{Hs}|d^*(u-df)|\le n^{-1/2}|\nb(u-df)|.
\e
On $L_1\cap L_2'$ near $S$ we have $u=df$ and so by \eq{f} 
\e\l{phf}|\nb(u-df)|\ge|\nb u|-|\nb df|\ge|\nb u|-c|df|=|\nb u|-c|u|\ge\frac{1}{2}|\nb u|\e
where the last inequality follows from \eq{ph1}.
On the other hand by \eq{ph3} we have on $L_1\cap L_2'$ near $S$
\e\l{phf2}
|d^*(u-df)|\le|d^*u|+c|df|
\le\frac{1}{4}n^{-1/2}|\nb u|+c|u|
\le\frac{1}{2}n^{-1/2}|\nb u|
\e
where the last inequality follows again from \eq{ph1}.
By \eq{phf} and \eq{phf2} we have \eq{Hs} as we want.

Finally we take a point $x\in L_1\cap L_2'$ near $S$
and compute its index,
which is the index of the nondegenerate symmetric bilinear form $\nb(u-df)$ on $T_xL_2.$
Diagonalize it by an orthonormal basis of $T_xL_2$ and denote by $\la_1,\cdots,\la_n$ the diagonal entries.
Then by \eq{Hs} we have
\[\ts|\la_1+\cdots+\la_n|\le n^{-1/2}(\la_1^2+\cdots+\la_n^2)^{1/2}\le\max\{|\la_1|,\cdots,|\la_n|\}.\]
But $\la_1,\cdots,\la_n$ are all nonzero so they have not all the same sign; that is, the index is neither $0$ nor $n$ as we want.
\end{proof}

\begin{proof}[Step 2: proof of the latter part of {\rm(i)}]
We make first a generic Hamiltonian perturbation of $L_2$ which underlies an object of $H\cF(X)$
and we define $L_2'$ as a further Hamiltonian perturbation of it.
As in step 1 we take a Weinstein neighbourhood of $L_2$ and define $L_2'$ as the graph of some exact $1$-form $dh_2$ over $L_2$
with $h_2\ne0$ on $S.$
Also as in step 1 write $L_1$ near $S$ as the graph of some real analytic closed $1$-form $u$ on $L_2$ near $S.$
Take a generic Hamiltonian perturbation $L_1'$ of $L_1$ which underlies $H\cF(X)$
and is close enough to $L_1$ to be written near $S$ as the graph of $u+dh_1$ over $L_2$ with
\e\l{ph5}|h_1|_{C^2}\ll |h_2|_{C^2}.\e
Since $dh_2\ne 0$ on $S$ it follows that there is a constant $c>0$ such that near $S$
we have $|h_2|_{C^2}<\frac{c}{4}|dh_2|.$
And then putting $f:=h_2-h_1$ we have
\e|h_2|_{C^2}<\frac{c}{4}(|df|+|dh_1|)\le\frac{c}{4}|df|+\frac{1}{2}|h_2|_{C^2}\l{ph6}\e
where the last inequality follows from \eq{ph5}. The whole estimate \eq{ph6} implies
$|h_2|_{C^2}<\frac{c}{2}|df|.$
Hence using again \eq{ph5} we find
\[|f|_{C^2}\le |h_2|_{C^2}+|h_1|_{C^2}<
\frac{c}{2}|df|+|h_2|_{C^2} <c|df|;\]
that is, the estimate \eq{f} holds as in step 1.
Following the subsequent estimates we see also that \eq{Hs} holds
now on $L_1'\cap L_2'$ near $S.$
This implies again that $L_1',L_2'$ have near $S$ no intersection point of index $0$ or $n.$
\end{proof}

\begin{proof}[Step 3: proof of Theorem \ref{Ham} {\rm (ii)}]
As in step 2 we make Hamiltonian perturbations $L_1',L_2'$ of $L_1,L_2$ both underlying $H\cF(X).$
The $L_2'$ is the graph of $dh_2$ over $L_2$ and
the $L_1'$ near $S$ is the graph of $u+dh_1$ over $L_2$ near $S.$
The difference $f:=h_2-h_1$ satisfies \eq{f} as in step 1.
We extend these to smooth families. By Theorem \ref{HamPert1} there are $g_t$-Maslov-zero Hamiltonian perturbations
$L_{1t},L_{2t}$ of $L_1,L_2.$
Extend the Weinstein neighbourhood of $L_2$ to those of $L_{2t}$
and write $L_2'$ as the graph of some $dh_{2t}$ over $L_{2t}.$
Denote by $S_t\sb L_{1t}\cap L_{2t}$ the set of intersection points at which $L_{1t},L_{2t}$ have at least one common tangent space.
Write $L_{1t}$ near $S_t$ as the graph of a real analytic closed $1$-form $u_t$ over $L_{2t}$ near $S_t$
and write $L_1'$ near $S_t$ as the graph of $u_t+dh_{1t}$ over $L_{2t}$ near $S_t.$
Then with $t$ small enough the estimate \eq{f} holds for $f_t:=h_{2t}-h_{1t}$ in place of $f$
and we can follow the subsequent estimates in step 1.
We can do this in a $t$-independent neighbourhood $U$ of $S\sb X$ because $S_t$ tends to $S.$
So $L_1',L_2'$ have in $U$ no intersection point of index $0$ or $n.$
\end{proof}

\section{Conclusions}\l{sec5}
From Lemma \ref{P0} and Theorem \ref{Ham} we deduce:
\begin{thm}\l{P1}
Let $(X,\om,J)$ be a real analytic K\"ahler manifold of complex dimension $n,$ $\Z_k$-graded with $k\ge n$ and given 
another real analytic Hermitian metric $g.$
Let $L_1,L_2\sb X$ two distinct closed irreducibly-immersed $g$-Maslov-zero Lagrangians with the same grading on $L_1\cap L_2.$
Then there exist arbitrarily-small Hamiltonian deformations $L_1',L_2'$ of $L_1,L_2$
which intersect generically each other with no intersection point of index $0$ or $n.$
If also Hypothesis \ref{0} holds and $L_1,L_2$ nearly underlie objects of $H\cF(X)$ then we can make $L_1',L_2'$ underly objects of $H\cF(X).$
\end{thm}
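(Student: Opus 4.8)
The plan is to reduce Theorem \ref{P1} to the local statement Theorem \ref{Ham}~(i) by combining it with the ``no bad intersection outside a neighbourhood of $S$'' statement of Lemma \ref{P0}. First I would apply Lemma \ref{P0} to $(X,\om,J,g)$ and the two graded Lagrangians $L_1,L_2$: the singular set $S\sb L_1\cap L_2$ being closed and $L_1,L_2$ closed, every open neighbourhood $U$ of $S$ admits $\ep>0$ such that any Hamiltonian $\ep$-perturbation of $L_2$ transverse to $L_1$ has no intersection point with $L_1\cap U$ of index $0$ or $n$ modulo $k\Z$. So away from $S$---provided we perturb by less than $\ep$---we are automatically safe; the only danger is near $S$.

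Next I would invoke Theorem \ref{Ham}~(i), which is exactly the complementary statement: since $L_1\ne L_2$ are distinct closed irreducibly-immersed $g$-Maslov-zero Lagrangians, there is a neighbourhood $U$ of $S$ and arbitrarily-small Hamiltonian deformations $L_1',L_2'$ intersecting generically with no intersection point \emph{in $U$} of index $0$ or $n$. The point is to run the two results with a \emph{compatible} choice of $U$: fix the neighbourhood $U$ coming from Theorem \ref{Ham}~(i), feed that $U$ into Lemma \ref{P0} to obtain the corresponding $\ep$, and then take the Hamiltonian deformations $L_1',L_2'$ of Theorem \ref{Ham}~(i) small enough that $L_2'$ is an $\ep$-perturbation of $L_2$ (and, if one also perturbs $L_1$, absorb that perturbation too, shrinking $U$ and re-applying Lemma \ref{P0} if necessary). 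Then $L_1'\cap L_2'$ splits as the part inside $U$, handled by Theorem \ref{Ham}~(i), and the part outside $U$, handled by Lemma \ref{P0}; in neither part is there an intersection point of index $0$ or $n$ modulo $k\Z$. For the grading-theoretic consistency---that the two notions of index agree---I would cite the second Remark after Theorem \ref{Ham}: if $L_1,L_2$ carry the same grading on $L_1\cap L_2$ and $L_1',L_2'$ are graded by the obvious homotopies, the local Morse index and the Maslov index coincide.

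For the last sentence, if Hypothesis \ref{0} holds and $L_1,L_2$ nearly underlie $H\cF(X)$, I would use the corresponding clause of Theorem \ref{Ham}~(i), which already produces $L_1',L_2'$ underlying $H\cF(X)$; one only needs to check this can be done while keeping $L_2'$ within the $\ep$ from Lemma \ref{P0}, which is automatic since the Hamiltonian deformations in Theorem \ref{Ham}~(i) are arbitrarily small.

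The main obstacle I anticipate is purely bookkeeping rather than conceptual: making sure the neighbourhood $U$ and the smallness parameter $\ep$ are chosen in the right order so that the two black boxes actually overlap---Theorem \ref{Ham}~(i) fixes $U$ first and then shrinks the perturbation, while Lemma \ref{P0} fixes $U$ first and then produces $\ep$---so one must quantify carefully and, if $L_1$ is also perturbed, re-apply Lemma \ref{P0} after shrinking. A secondary subtlety is confirming that ``generic intersection'' in Theorem \ref{Ham}~(i) is compatible with the transversality hypothesis ``intersects $L_1$ generically (only at transverse double points)'' in Lemma \ref{P0}; these are the same condition, so no work is needed, but it must be stated.
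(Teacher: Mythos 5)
Your proposal is correct and matches the paper's own proof essentially verbatim: fix $U$ from Theorem \ref{Ham}, obtain the corresponding $\ep$ from Lemma \ref{P0}, then take the $\ep$-small deformations of Theorem \ref{Ham} and treat the intersection points inside $U$ by Theorem \ref{Ham} and those outside by Lemma \ref{P0}. Your extra care about re-applying Lemma \ref{P0} when $L_1$ is also perturbed is a reasonable refinement that the paper leaves implicit.
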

\begin{proof}
Let $U$ be as in Theorem \ref{Ham} and corresponding to this $U$ let $\ep$ be as in Lemma \ref{P0}.
Then corresponding to this $\ep$ let $L_i'$ $(i=1,2)$ be such $\ep$-perturbations of $L_i$ as in Theorem \ref{Ham}.
Applying Lemma \ref{P0} to the intersection points outside $U$
and Theorem \ref{Ham} to those in $U$
we see that $L_1',L_2'$ are such as we want.
\end{proof}

We make now the $C^\iy$ version of Theorem \ref{P1}:
\begin{thm}\l{3.1}
Let $(X,\om,J)$ be a K\"ahler manifold of complex dimension $n$
which is either: $\Z$-graded by a $J$-holomorphic volume form $\Ps$ and given the conformally K\"ahler metric $g$ as in $\S\ref{sec2}$ by using $\eq{ps};$
or $\Z_k$-graded with $k\ge n$ and given another K\"ahler metric $g$
whose the Ricci $(1,1)$-form is $-\om.$ 
Let $L_1,L_2\sb (X,\om,J,g)$ be two distinct closed irreducibly-immersed $g$-Maslov-zero Lagrangians with the same grading on $L_1\cap L_2.$
Suppose that one of the following three conditions holds: {\bf(i)} $X$ is compact;  {\bf(ii)} $X$ is $\om$-convex, $\om$ is exact and $J$ is Stein; or
{\bf(iii)} either $L_1$ or $L_2$ is generically immersed.
Then the same conclusion holds as in Theorem \ref{P1} (both the two sentences in it).
\end{thm}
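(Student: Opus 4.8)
The plan is to deduce Theorem~\ref{3.1} from Theorem~\ref{Ham} and Lemma~\ref{P0} in exactly the way Theorem~\ref{P1} was deduced from them: take the neighbourhood $U$ of $S$ produced by Theorem~\ref{Ham}, let $\ep>0$ be the constant attached to $U$ by Lemma~\ref{P0}, and take $\ep$-small Hamiltonian perturbations $L_1',L_2'$; then Lemma~\ref{P0} removes index-$0$ and index-$n$ intersections outside $U$, Theorem~\ref{Ham} removes them inside $U$, and the last clause about underlying $H\cF(X)$ under Hypothesis~\ref{0} is inherited from the corresponding clause of Theorem~\ref{Ham}. The only thing new relative to Theorem~\ref{P1} is that $\om$ and $g$ are now merely $C^\iy$, so in place of Theorem~\ref{Ham}(i) I must verify the hypothesis of Theorem~\ref{Ham}(ii): I need a smooth family $(J_t,g_t,L_{1t},L_{2t})$ with $(J_0,g_0,L_{10},L_{20})=(J,g,L_1,L_2)$, each $J_t$ an $\om$-compatible complex structure, each $g_t$ a $J_t$-Hermitian metric, each $L_{it}$ a $g_t$-Maslov-zero Hamiltonian perturbation of $L_i$, and $\om,g_t$ real analytic with respect to $J_t$ for $t\ne0$. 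Since Theorem~\ref{Ham} is local near $S\sb L_1\cap L_2$, it is enough to build such a family near $S$.

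First I would dispose of the genuine K\"ahler--Einstein case, where $\om$ is itself the Ricci-negative Einstein form. Here no deformation is needed: a local K\"ahler potential $\ph$ of $\om$ satisfies a complex Monge--Amp\`ere equation whose inhomogeneous term is $e^{\ph}$ times the squared modulus of a local holomorphic function, hence has real-analytic coefficients and analytic nonlinearity, so by analytic elliptic regularity $\ph$ --- and with it $\om$ and $g$ --- is real analytic for the given $J$; and $L_1,L_2$, being $g$-Lagrangian and $J$-minimal, are then minimal submanifolds of a real-analytic Riemannian manifold, hence themselves real analytic (Morrey). Thus Theorem~\ref{Ham}(i) applies verbatim.

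The remaining cases --- the conformally K\"ahler (special Lagrangian) case and the $c_1<0$ case with $g$ not the given K\"ahler metric --- are genuinely $C^\iy$, and there I would build the family by a Moser trick. Produce a real-analytic K\"ahler form $\om'$ on $X$ (or just near a compact neighbourhood of $S$), cohomologous and $C^\iy$-close to $\om$; apply Moser's theorem to get a $C^\iy$-small isotopy $\Phi_t$, $\Phi_0=\id$, with $\Phi_t^*\om'=\om$; and set $J_t:=\Phi_t^*J$. Then $J_t$ is an $\om$-compatible complex structure ($J$ is $\om'$-compatible because $\om'$ is close to $\om$, and compatibility pulls back), and $\Phi_t$ is a biholomorphism $(X,J_t)\to(X,J)$ sending $\om$ to the $J$-analytic form $\om'$, so $\om$ is $J_t$-analytic. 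In the conformally K\"ahler case I then put $\Ps_t:=\Phi_t^*\Ps$ and let $g_t$ be the conformally K\"ahler metric of $(X,\om,J_t,\Ps_t)$ defined via \eqref{ps}; in the $c_1<0$ case I fix the K\"ahler class of $g$, take the K\"ahler metric $g'_t$ in that class on $(X,J)$ with Ricci form $-\om'_t$ --- which is real analytic by the same Monge--Amp\`ere regularity once $\om'_t$ is --- and set $g_t:=\Phi_t^*g'_t$, so that $g_t$ is $J_t$-K\"ahler with Ricci form $-\om$. In both cases Theorem~\ref{HamPert1} carries the special, resp.\ $J$-minimal, Lagrangians $L_1,L_2$ along as Hamiltonian perturbations $L_{1t},L_{2t}$, which are therefore $g_t$-Maslov-zero; this is precisely the family wanted by Theorem~\ref{Ham}(ii).

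It remains to construct the analytic K\"ahler form $\om'$, and this is exactly where the three hypotheses enter. If $X$ is compact, fix a real-analytic K\"ahler metric on $(X,J)$, write $\om=\om^{\mathrm{harm}}+i\d\db\ph$ with $\om^{\mathrm{harm}}$ the harmonic representative of $[\om]$ (analytic, by elliptic regularity), and take $\om':=\om^{\mathrm{harm}}+i\d\db\ph'$ for a real-analytic approximation $\ph'$ of $\ph$: this is analytic, closed, in class $[\om]$, and positive once $\ph'$ is close enough. If $X$ is $\om$-convex with $\om$ exact and $J$ Stein, the global $dd^c$-lemma on $(X,J)$ writes $\om=i\d\db\ph$ and I take $\om':=i\d\db\ph'$, the $\om$-convexity supplying a complete Moser isotopy. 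If instead $L_1$ or $L_2$ is generically immersed, I work near $S$ inside a Weinstein neighbourhood of a local component of it: that component is a maximal totally real Lagrangian, so $\om$ has a $dd^c$-potential $\ph$ there; I approximate $\ph$ by a real-analytic $\ph'$, cut off to a compact neighbourhood of $S$, and run Moser with the resulting compactly supported vector field, leaving $\om$ and $J$ untouched away from $S$. I expect this last step --- making the analytic approximation, the cutoff and the Moser isotopy cohere in each case while keeping $L_1,L_2$ exactly Lagrangian so that Theorem~\ref{HamPert1} can transport them --- to be the main obstacle; once the analytic ambient family is in hand, everything else follows the proof of Theorem~\ref{P1}.
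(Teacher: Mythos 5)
Your overall route is the paper's: reduce to Theorem~\ref{Ham}(ii) plus Lemma~\ref{P0} exactly as in Theorem~\ref{P1}, construct case by case a family of K\"ahler forms on $(X,J)$ that are analytic for $t\ne0$, use Moser diffeomorphisms $\Ph_t$ so that $\om$ becomes analytic for $J_t:=\Ph_t^*J$, and transport $L_1,L_2$ by Theorem~\ref{HamPert1}; your rebuilding of $g_t$ from $(\om,J_t,\Ps_t)$ in the special Lagrangian case is in fact a clean way to meet the $J_t$-analyticity of $g_t$ required in Theorem~\ref{Ham}(ii). The genuine gap is in case (i): you begin by ``fixing a real-analytic K\"ahler metric on $(X,J)$'', and the existence of such a metric on a general compact K\"ahler manifold is precisely the nontrivial point --- it is what the paper proves, following Kodaira--Spencer \cite{KS}: starting from an analytic Riemannian metric, averaging to an analytic Hermitian (not K\"ahler) metric, forming the fourth-order elliptic operators $E_t$ whose kernels consist of closed forms, and using the constancy of $\dim\ker E_t$ and the smooth dependence of the projection to produce a smooth family $\om_t$ with $\om_0=\om$ and $\om_t$ analytic for $t\ne0$. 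Without that construction (or an explicit reference for the existence of analytic K\"ahler metrics) your argument is circular: harmonic representatives with respect to a merely Hermitian analytic metric need not be of type $(1,1)$, so the decomposition $\om=\om^{\mathrm{harm}}+i\d\db\ph$ you rely on is not available.

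Two further points. In the Ricci-form $-\om$ alternative you propose to solve a prescribed-Ricci Monge--Amp\`ere equation for $g_t'$ in the class of $g$; this works when $X$ is compact (Yau, plus an implicit-function argument for smoothness in $t$), but you have no existence theory in the noncompact alternatives (ii) and (iii), whereas the paper simply takes $g_t:=\Ph_t^*g$ and corrects the Lagrangians with Theorem~\ref{HamPert1}, so that no Monge--Amp\`ere problem is solved. In case (iii) you make the structures analytic only near $S$, but the relaxation the paper allows (the remark after Theorem~\ref{Ham}) only goes down to a neighbourhood of $L_1\cap L_2$, and the proof of Theorem~\ref{Ham} does use analyticity there (the density of $L_2\setminus L_1$ in $L_2$ and the choice of the Morse function $f$ with $df\ne0$ on all of $L_1\cap L_2$); accordingly the paper takes a Stein open set containing the whole generically immersed Lagrangian, built from the product of squared distances to the two branches near the double points, rather than a cut-off neighbourhood of $S$ alone, and then runs the case (ii) argument there. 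Your cut-off Moser near $S$ would need an additional argument before Theorem~\ref{Ham} can be invoked.
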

\begin{proof}
We reduce the problem to the real analytic case by a further perturbation process.
In the case (i) we use the fact that
for every compact K\"ahler manifold $(X,\om,J)$
there exists on $(X,J)$ a smooth family $(\om_t)_{t\in\R}$ of K\"ahler forms with $\om_0=\om$ and $\om_t,$ $t\ne0,$ analytic.
This seems well known but for clarity we give it a proof.
Denote by $g_\om$ the K\"ahler metric of $(X,\om,J)$
and take on $(X,J)$ a smooth family of {\it Riemannian} metrics $g_t'$
with $g_0'=g_\om$ and $g_t',$ $t\ne0,$ analytic.
Then $g_t:=\hf(g_t'+J^*g_t')$ defines on $(X,J)$ a smooth family of Hermitian metrics with
$g_0=g_\om$ and $g_t,$ $t\ne0,$ analytic.
Denote by $\d_t^*,\db_t^*$ the respective formal $g_t$-adjoints of $\d,\db$
and following Kodaira and Spencer \cite[\S6]{KS}
introduce the smooth family of elliptic operators
\[E_t:=\d\db\db_t^*\d_t^*+\db_t^*\d_t^*\d\db+\db_t^*\d\d_t^*\db+
\d_t^*\db\db_t^*\d+\d_t^*\d+\db_t^*\db.\]
Since $g_0$ is K\"ahler it follows readily that 
$E_0$ is apart from the last two terms just the squared Laplacian.
The last two terms are so added that the kernel of $E_t$ consists of closed forms.
Denote by $\om_t'$ the projection of $\om$ onto the kernel of $E_t$ in the space of $(1,1)$-forms.
Then by Kodaira--Spencer's result \cite[Proposition 8]{KS} the kernel of $E_t$ has dimension independent of $t.$
So by another result of them \cite[Theorem 5]{KS} $\om_t'$ is smooth with respect to $t.$
Putting $\om_t:=\hf(\om_t'+\ov\om_t')$ we see that $\om_t$ is such as we want.

In the case (ii) we write $\om=dJdp$ for some $p:X\to\R$ and perturb $p$ to a real analytic function.
In the case (iii) we take a Stein open set in $(X,J)$ on which $\om$ is exact and which contains the generically immersed Lagrangian, say $L_1.$
This is possible because every self-intersection point of $L_1$ is a transverse double point;
the Stein structure near it is given by taking the product of squared distances from the two components of the generically immersed Lagrangian, which is a plurisubharmonic function.  

In every case there are K\"ahler perturbations $\om_t$ of $\om$ and by Moser's theorem diffeomorphisms $\Ph_t:X\to X$
with $\om=\Ph_t^*\om_t.$
Then $\om$ is analytic with respect to $J_t:=\Ph_t^*J$
and for $i=1,2$ the Lagrangian $\Ph_t^*L_i\sb(X,\om)$ is a nearly-zero Maslov-form
relative to $g_t:=\Ph_t^*g.$
So by Theorem \ref{HamPert1} there is a $g_t$-Maslov-zero Hamiltonian perturbation $L_{it}$ of $\Ph_t^*L_i$
to which we can apply Theorem \ref{Ham} (ii).
Combining it with Lemma \ref{P0} as in the proof of Theorem \ref{P1} we see that Theorem \ref{3.1} holds.  
\end{proof}

Finally using the results of \S\ref{sec3} we get:
\begin{cor}\l{2}
{\bf(i)}
Let $(X,\om,J,g,L_1,L_2)$ be such as in Theorem \ref{P1} or Theorem \ref{3.1} except that $L_1,L_2\sb X$ may be equal.
Let Hypothesis \ref{0} hold and let $\b_1,\b_2\in H\cF(X)$ be two objects supported respectively near $L_1,L_2$ such that:
\e\l{21}
\text{either $\b_1\cong\b_2$ or $k\ge 2n-1$ and $\b_1\cong\b_2$ up to shift.}
\e
Suppose also that 
\e
\l{22}
HF^i(\b_1,\b_1)\cong HF^i(\b_2,\b_2)\ne0\text{ for }i=0,n.
\e
More generally, suppose as in Corollary \ref{7.1} that either: $c_1(X)\le0$ and one of $L_1,L_2$ is embedded;
or $k\ge n+2$ and one of $L_1,L_2$ has on its every self-intersection pair the two gradings nearly equal.
Then $L_1=L_2\sb X.$
\\{\bf(ii)}
Let $(X,\om,J)$ be a K\"ahler manifold equipped with a holomorphic volume form;
and $L_1,L_2\sb X$ two closed irreducibly-immersed special Lagrangians.
Suppose that either $\om$ is $J$-analytic or one of {\rm (i)--(iii)} of Theorem \ref{3.1} holds. 
Let Hypothesis \ref{0} hold and let $H\cF(X)$ have two isomorphic objects supported respectively near $L_1,L_2.$
Then $L_1=L_2\sb X.$
\end{cor}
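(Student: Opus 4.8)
The plan is to deduce Corollary~\ref{2} directly from the Hamiltonian-perturbation machinery of \S\ref{sec4} together with the $HF^*$-nonvanishing results of \S\ref{sec3}. For part~(i), I would argue by contradiction: suppose $L_1\ne L_2$ as subsets of $X$. Then the hypotheses of Theorem~\ref{P1} (if $\om,g$ are analytic) or Theorem~\ref{3.1} (in the $C^\iy$ case, using one of the conditions (i)--(iii)) are met, so there exist arbitrarily-small Hamiltonian deformations $L_1',L_2'$ of $L_1,L_2$ which intersect generically with \emph{no} intersection point of index $0$ or $n$ modulo $k\Z$, and which moreover underlie objects of $H\cF(X)$. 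Since $\b_1,\b_2$ are supported near $L_1,L_2$, we may take these underlying objects to represent $\b_1,\b_2$ themselves (up to the quasi-isomorphisms built into ``supported near'').

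The next step is to translate the Floer-theoretic hypotheses \eqref{21}--\eqref{22} into a statement about the Floer cohomology $HF^*(\b_1,\b_2)$, which at the chain level is computed from $C^*_{L_1'\cap L_2'}$, i.e.\ from the transverse intersection points of $L_1'$ with $L_2'$ graded by their Maslov indices. Because $L_1',L_2'$ carry gradings induced from those of $L_1,L_2$ (which agree on $L_1\cap L_2$ by hypothesis), and because no intersection point has index $0$ or $n$, the chain complex computing $HF^*(\b_1,\b_2)$ vanishes in degrees $0$ and $n$ modulo $k\Z$; hence $HF^0(\b_1,\b_2)=HF^n(\b_1,\b_2)=0$. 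On the other hand, $\b_1\cong\b_2$ (possibly up to a shift, when $k\ge 2n-1$, which by the degree count only moves degrees $0,n$ among themselves within $\{0,\dots,n\}$ since $2n-1\ge n+1$ forces the shift to identify $\{0,n\}$ with $\{0,n\}$ or be excluded) gives $HF^i(\b_1,\b_2)\cong HF^i(\b_1,\b_1)$ for $i=0,n$, and \eqref{22} (or, in the alternative phrasing, Corollary~\ref{7.1} applied to the embedded or nearly-equal-grading case) says $HF^i(\b_1,\b_1)\ne0$ for $i=0,n$. This contradicts the vanishing just obtained, so $L_1=L_2$.

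For part~(ii), the special-Lagrangian case, I would first invoke Corollary~\ref{7}(ii): since $\b_1\cong\b_2$ are supported near special Lagrangians $L_1,L_2$, these have up to shift the same $\Z$-grading; after applying a shift we may assume the gradings agree, and in particular agree on $L_1\cap L_2$. A special Lagrangian is $g$-Maslov-zero for the conformally K\"ahler metric $g$ of \S\ref{sec2}, so with $k=\iy\ge n$ the data $(X,\om,J,g,L_1,L_2)$ fit the hypotheses of Theorem~\ref{P1} (when $\om$ is $J$-analytic) or Theorem~\ref{3.1} (otherwise). The nonvanishing $HF^i(\b_1,\b_1)\ne0$ for $i=0,n$ is supplied by Corollary~\ref{7}(i). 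One then runs exactly the same contradiction as in part~(i), now with $k=\iy$ so that ``index $0$ or $n$ modulo $k\Z$'' just means index $0$ or $n$, concluding $L_1=L_2\sb X$.

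The main obstacle I anticipate is \emph{not} in the geometric perturbation step---that is handled wholesale by Theorem~\ref{Ham}/Theorem~\ref{P1}/Theorem~\ref{3.1}---but in the bookkeeping that connects ``$\b_1\cong\b_2$ in $H\cF(X)$'' to ``the chain complex $C^*_{L_1'\cap L_2'}$ computes $HF^*(\b_1,\b_2)$ with the expected grading''. Concretely one must check: that the Hamiltonian perturbations $L_i'$ may be taken to represent the given objects $\b_i$ (invariance of Floer cohomology under Hamiltonian isotopy, together with the definition of ``supported near''); that the grading on $L_1'\cap L_2'$ induced by the homotopies from $L_i$ matches the Maslov-index grading of \S\ref{sec2}, as noted in the Remark after Theorem~\ref{Ham}; and that the shift ambiguity allowed by \eqref{21} when $k\ge 2n-1$ genuinely cannot shift a nonzero class out of degrees $\{0,n\}$ while keeping $HF^0$ or $HF^n$ nonzero---this is where the precise inequality $k\ge 2n-1$ is used, since a shift by $s\not\equiv0$ would move degree $0$ to degree $s$, and for the shifted complex to still have nonzero cohomology in degree $0$ or $n$ one needs $s\equiv0$ or $s\equiv n-(-0)$ etc., all excluded modulo $k\Z$ when $k$ is this large. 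Once these chain-level identifications are in place the contradiction is immediate, so the proof is short modulo the careful invocation of the earlier sections.
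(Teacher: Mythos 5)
Your handling of the first alternative of \eqref{21} (and of the ``more generally'' clause via Corollary \ref{7.1}) matches the paper, but the ``up to shift'' case is where your argument has a genuine gap, and part (ii) inherits it because that case is exactly the one part (ii) needs. Your parenthetical claim that the shift must ``identify $\{0,n\}$ with $\{0,n\}$'' by a degree count is unjustified: self-Floer cohomology is shift-invariant, $HF^i(\b_1[s],\b_1[s])\cong HF^i(\b_1,\b_1)$, so \eqref{22} places no constraint at all on the shift $s$; and nothing in the hypotheses of Corollary \ref{2}(i) confines the support of $HF^*(\b_1,\b_1)$ to degrees $0,\dots,n$ (that would require $L_1$ embedded or the nearly-equal-gradings condition on self-intersections). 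From the perturbation you extract only $HF^0(\b_1,\b_2)=HF^n(\b_1,\b_2)=0$; writing $\b_2\cong\b_1[s]$, this rules out only $s\equiv0,\pm n$ modulo $k\Z$, while for any other $s$ the nonvanishing of $HF^0(\b_1,\b_1)$ and $HF^n(\b_1,\b_1)$ merely produces nonzero classes of $HF^*(\b_1,\b_2)$ in degrees $-s$ and $n-s$, which contradicts nothing you have established. Your sentence ``all excluded modulo $k\Z$ when $k$ is this large'' does not correspond to a valid deduction: the shift is given by the hypothesis and cannot be ``excluded''; a contradiction must be derived for every possible $s$.

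The paper closes this by using the full strength of Theorem \ref{P1}/Theorem \ref{3.1}: because the gradings of $L_1,L_2$ agree on $L_1\cap L_2$ and the perturbations are small, every point of $L_1'\cap L_2'$ has index admitting an integer representative in $[0,n]$, hence in $\{1,\dots,n-1\}$ once indices $0$ and $n$ are excluded; so $HF^*(\b_1,\b_2)$ is supported in degrees $1,\dots,n-1$ modulo $k\Z$, and after the shift $HF^*(\b_1,\b_1)$ is supported in $n-1$ consecutive degrees $i,\dots,i+n-2$ modulo $k\Z$. Demanding that both $0$ and $n$ lie in translates of this interval forces some $j\in k\Z$ with $2\le j\le 2n-2$, which is impossible precisely because $k\ge 2n-1$. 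You need to add this interval argument; vanishing in the two degrees $0,n$ alone is not enough. Note also that part (ii) cannot bypass it: after using Corollary \ref{7}(ii) to arrange that $L_1,L_2$ carry literally the same constant grading, the given isomorphism of the two objects survives only up to an unknown shift in $\Z$, so ``running the same contradiction as in part (i)'' means running exactly the case your part (i) does not cover; with $k=\iy$ the interval argument applies verbatim and is how the paper concludes.
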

\begin{proof}
Suppose contrary to the assertion that $L_1\ne L_2.$
Then perturbing $L_1,L_2$ as in Theorem \ref{P1} or Theorem \ref{3.1} 
we see that $HF^*(\b_1,\b_2)$ is supported in degrees $1,\cdots,n-1$ modulo $k\Z.$
But by hypothesis we have $k\ge n$ so $HF^0(\b_1,\b_2)=HF^n(\b_1,\b_2)=0.$ 
If also $\b_1\cong\b_2$ then $HF^0(\b_1,\b_1)=HF^n(\b_1,\b_1)=0$ which however contradicts \eq{22} or Corollary \ref{7.1}. This is the first case of \eq{21}.

In the second case we have $k\ge2n-1$ but $\b_1\cong\b_2$ only up to shift.
Then from Theorem \ref{P1} we see only that
$HF^*(\b_1,\b_1)$ is supported in degrees $i,\cdots,i+n-2$ for some $i\in\Z.$
But again we have $HF^0(\b_1,\b_1)\ne0$ and $HF^n(\b_1,\b_1)\ne0$ so after translating $[i,i+n-2]$ by $k\Z$
we may suppose $0\in[i,i+n-2]$ and $n\in[i+j,i+j+n-2]$ for some $j\in k\Z.$
Then $i\in[2-n,0]\cap[2-j,n-j]$ so $0\ge 2-j$ and $2-n\le n-j;$ that is, $j\in[2, 2n-2].$
So $0<j<2n-1\le k$ which however contradicts $j\in k\Z.$
This proves Corollary \ref{2} (i).

For Corollary \ref{2} (ii) we use Corollary \ref{7} (ii) to see that $L_1,L_2$ have up to shifts the same grading.
We can then apply Corollary \ref{2} (i) with $k=\iy$ which completes the proof.
\end{proof}

\end{document}